\documentclass[12pt]{article}

\usepackage{amsfonts}
\usepackage{graphicx}
\usepackage{amsthm}
\usepackage{amssymb}
\usepackage{amsmath}

\setcounter{MaxMatrixCols}{10}

\textwidth = 6 in  
\textheight = 8 in
\oddsidemargin = 0.0 in
\evensidemargin = 0.0 in
\topmargin = 0.0 in
\headheight = 0.3 in
\parskip = 0.2 in
\parindent = 0.0 in
\newtheorem{theorem}{Theorem}[section]
\newtheorem{proposition}{Proposition}[section]
\newtheorem{corollary}{Corollary}[section]
\newtheorem{lemma}{Lemma}[section]
\newtheorem{definition}{Definition}[section]

\newtheorem{remark}{Remark}[section]

\newtheorem{example}{Example}[]

\begin{document}

\title{Periodic projections of alternating knots}
\author{Antonio F. Costa and Cam Van Quach-Hongler}
\maketitle

\begin{abstract}
\noindent This paper is devoted to prove the existence of $q$-periodic
alternating projections of prime alternating $q$-periodic knots. The main
tool is the Menasco-Thistlethwaite's Flyping theorem.

\noindent Let $K$ be an oriented prime alternating knot that is $q$-periodic
with $q\geq 3$, i.e. $K$ admits a symmetry that is a rotation of order $q$.
Then $K$ has an alternating $q$-periodic projection.\newline
\noindent \noindent As applications, we obtain the crossing number of a $q$%
-periodic alternating knot with $q\geq 3$ is a multiple of $q$ and we give an elementary proof that the knot $%
12_{a634} $ is not 3-periodic; this proof does not depend on computer
computations as in \cite{jana}.
\end{abstract}

\section{Introduction}

In this paper, links (knots are one-component links) in $S^3$ and
projections in $S^2$ are assumed, unless otherwise indicated, prime and
oriented.\newline
The purpose of this paper is the study of the visibility of the periodicity
of alternating knots on alternating projections initiated in \cite{co}. The
main result is:

\textbf{Visibility Theorem 3.1} \textit{Let $K$ be an oriented prime
alternating knot that is $q$-periodic with $q\geq 3$. Then $K$ has a $q$%
-periodic alternating projection}.

We also obtain the following applications:

1. If Seifert's algorithm is applied on a $q$-periodic projection of an
oriented link, the resulting surface exhibits a $q$-periodic symmetry. Such
a surface is called \textbf{$q$-equivariant}. The topological types of
periodic homeomorphisms of bordered surfaces that are equivariant Seifert
surfaces of periodic links are studied in \cite{co1}. A. Edmonds \cite{ed}
shows that if a knot $K$ is of period $q$, then there is a $q$-equivariant
Seifert surface for $K$, which has the genus of $K$. For $K$ a $q$-periodic
prime oriented alternating knot with $q\geq 3$, the strategy explained in
the proof of Theorem 3.1 enables to exhibit the realization of a $q$%
-equivariant surface from Seifert's algorithm which has the genus of $K$.

2. The following result is also a direct consequence of Theorem 3.1:

\textbf{Proposition 3.4} (Conjecture in Section 1.4 of \cite{co}) The crossing
number of a $q$-periodic alternating knot with $q\geq 3$ is a multiple of $q$.

3. The Murasugi decomposition into atoms of $12a_{634}$ gives rise to an
adjacency graph which is a tree of 2-vertices (\cite{quwe1}, \cite{quwe2}).
According to Visibility Theorem 3.1 and Lemma 3.2 (which is an application
of Corollary 1 in \cite{co}), we deduce that $12a_{634}$ is not 3-periodic.
We thank C. Livingstone to point out the existence of a computer proof of
this fact by S. Jabuka and S. Naik \cite{jana}.

\subsection{Organization of the paper}

For the study of the visibility of the periodicity of the alternating knots
on alternating projections, we will call upon the \textbf{canonical
decomposition of link projections} recalled in \S 2, as it was done for the
visibility of achirality of the alternating knots in \cite{erquwe2}. The
decomposition of a link projection $\Pi $ is carried out by a family of 
\textbf{canonical Conway circles} which decomposes $(S^{2},\Pi )$ into 
\textbf{diagrams} called \textbf{jewels} and \textbf{twisted band diagrams};
the \textbf{arborescent part} of $\Pi $ is the union of the twisted band
diagrams of $\Pi $. The decomposition of the diagram $(S^{2},\Pi )$ by the
canonical Conway circles is a 2-dimensional version of the decomposition of
Bonahon-Siebenman \cite{bosi} of $(S^{3},K)$ into an \textbf{algebraic part} 
$(A,A\cap K)$ and a \textbf{non-algebraic part} $(N,N\cap K)$. Each
component of $\partial A=\partial N$ is a 2-sphere that cuts $K$ in four
points and is called a \textbf{Conway sphere}. We now assume that links and
projections we consider are alternating. In our terminology, the
2-dimensional notion \textquotedblleft arborescent" implies the
3-dimensional notion \textquotedblleft algebraic" (see the definition for
instance in \cite{thi}). The projection of a Conway sphere on $S^{2}$ is a
Conway circle. The inverse is not true: there are ``hidden"
Conway spheres that do not project on Conway circles on alternating
projections (\cite{thi}). However since our point of view is strictly
2-dimensional and based only on alternating projections, this case does not
affect us.\newline
The notion of \textbf{flype} in alternating projections (Fig. 7) is at the
heart of our analysis and lies completely in their \textbf{arborescent part}%
. According to Menasco-Thistlethwaite's Flyping theorem \cite{meth}, two
reduced alternating projections $\Pi _{1}$ and $\Pi _{2}$ of an isotopy
class of an alternating link $K$ are related by a finite sequence of flypes,
up to homeomorphisms of $S^{2}$ onto itself. 
Starting from the canonical decomposition of a projection of $\Pi (K)$ of $K$%
, we associate \textbf{canonical and essential structure trees} (as recalled in \S 2) that
does not depend on the choice of an alternating projection. The canonical
and essential structure trees are invariants of the isotopy class of
alternating knots. For example, for rational links, their canonical
structure tree is a linear tree with integer-weighted vertices and their
essential structure tree is reduced to a vertex of rational weight. 

In \S 3 we study how the $q$-periodicity acts on the essential Conway
circles and on the diagrams of any alternating projection. With the help of
Kerekjarto's theorem \cite{coko} and Flyping Theorem of
Menasco-Thistlethwaite we prove Theorem 3.1 and we obtain a $q$-periodic
alternating projection by adjustments with flypes on any alternating prime $%
q $-periodic knot. With the help of the Murasugi decomposition into atoms for periodic
alternating knots and a result in \cite{co} which links the $q$-periodicity of
an alternating knot to the $q$-periodicity of its atoms, we finally show
that $12a_{634}$ is not 3-periodic. 

\section{Canonical Decomposition of a Projection}

\noindent In this section we do not assume that link projections are
alternating. A \textbf{projection} on $S^2$ is the image of a link in $S^3$
by a generic projection onto $S^2$, hence a labeled graph with $n$ 4-valent
crossing-vertices labeled to reflect under and over crossings.\\
In this paper the term ``{ \bf diagram}'' will be used to refer to a different object (see below 
\S 2.1).

\subsection{Diagrams}

Let $\Sigma$ be a compact connected planar surface embedded on the
projection sphere $S^2$. We denote by $k+1$ the number of connected components
of its boundary $\partial \Sigma$.

\begin{definition}
The pair $D = (\Sigma , \Gamma= \Pi \cap \Sigma)$ where $\Pi$ is a link
projection is called a \textbf{diagram} if for each connected component $C$
of $\partial \Sigma$, $C \cap \Pi$ is composed exactly of 4 points.
\end{definition}

\begin{remark}
A (link) projection $\Pi$ on $S^2$ is a diagram $(\Sigma, \Pi)$ where $\Sigma= S^2$.
\end{remark}

\begin{definition}
(1) A \textbf{trivial diagram} is a diagram homeomorphic to $\mathrm{T
([\infty])}$ (Fig. $1(a)$).\newline
(2) A \textbf{singleton} is a diagram homeomorphic to Fig. 1(b).
\end{definition}

\begin{figure}[h!]
\centering
\par
\includegraphics[scale=.4]{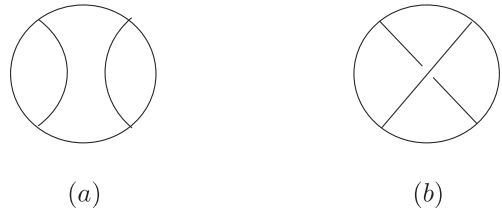}
\caption{$(a)$ Tangle $\mathrm{T ([\infty])}$ \quad \quad $(b)$ A singleton }
\end{figure}

\begin{definition}
A \textbf{Haseman circle} of a diagram $D = (\Sigma, \Gamma)$ is a circle $%
\gamma \subset \Sigma$ that intersects the projection $\Pi$ exactly in 4
points away from crossing points. A Haseman circle is said to be \textbf{%
compressible} if $\gamma$ bounds a disc $\Delta$ in $\Sigma$ such that $%
(\Delta,\Gamma \cap \Delta)$ is either a trivial diagram or a singleton.
\end{definition}

In what follows, {\bf Haseman circles are not compressible}. We therefore only consider diagrams
that are neither trivial diagrams nor singletons.

\begin{definition}
A \textbf{twisted band diagram} (TBD) is a diagram homeomorphic to Fig. 3.
\end{definition}

The \textbf{signed weight} of a crossing on a band is defined according to
Fig. 2. It depends on the direction of the half-twist of the band supporting
the crossing.

\begin{figure}[ht]
\centering
\includegraphics[scale=0.3]{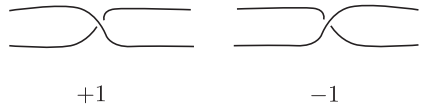}
\caption{The signed weight of a crossing on a band}
\end{figure}

\begin{figure}[ht]
\centering
\includegraphics[scale=.45]{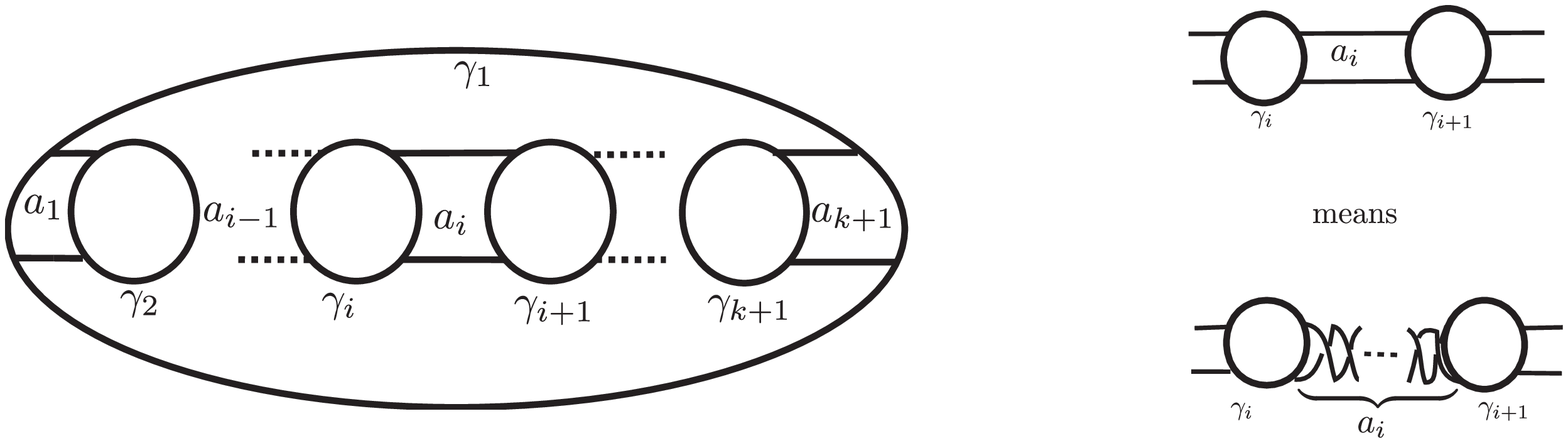}
\caption{ A twisted band diagram}
\end{figure}

In Fig. 3 the boundary components of $\Sigma$ are denoted $\gamma_1 , \dots
, \gamma_{k+1}$ where $k \geq 0$. The corresponding portion of the band
diagram between the projection and the circles $\gamma_i$ and $\gamma_{i+1}$
is called a \textbf{twist region} with $\vert a_i \vert$ crossing points.
The sign of $a_i$ is the signed weight of the $\vert a_i \vert$ crossing
points. The integer $a_i$ will be called an intermediate weight. 
\newline
If $k+1 = 1$, the planar surface $\Sigma$ is a disc and the twisted band
diagram $(\Sigma, \Sigma \cap \Pi)$ is called a \textbf{spire} with $\vert
a_1 \vert \geq 2$ crossings.\newline
If $k+1= 2$, the twisted band diagram is a \textbf{twisted annulus} and we
require that $a_1+a_2 \neq 0$.

We ask the crossings on the same band to have the same signed weight. In 
other words, using flypes (Fig. 7) and Reidemeister move 2, we can
reduce the number of crossing points of a twisted band diagram so that all
non zero intermediate weights $a_i$ of a twisted band diagram have the same
sign.

\begin{definition}
(1) The crossings of a TBD (twisted band diagram) $(\Sigma, \Sigma \cap \Pi)$
are called the \textbf{visible crossings} of $(\Sigma, \Sigma \cap \Pi)$.%
\newline
(2) The sum $a= \Sigma a_i$ is called the \textbf{total weight} of the
twisted band diagram $(\Sigma, \Sigma \cap \Pi)$. If $k+1\geq 3$ we may have 
$a = 0$. The absolute value of $a$ is equal to the number of the visible
crossings of $(\Sigma, \Sigma \cap \Pi)$.
\end{definition}
Two Haseman circles are said to be \textbf{parallel} if they bound an
annulus $A \subset \Sigma$ such that the pair $(A , A \cap \Gamma)$ is
diffeomorphic to Fig. 4.
\begin{figure}[ht]
\centering
\includegraphics[scale=0.2]{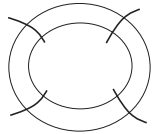}
\caption{Parallel Haseman circles}
\end{figure}

\noindent We define a Haseman circle $\gamma$ to be \textbf{boundary parallel%
} if there exists an annulus $A \subset \Sigma$ such that:\newline
(1) the boundary $\partial A$ of $A$ is the disjoint union of $\gamma$ and a
boundary component of $\Sigma$;\newline
(2) $(A , A \cap \Gamma)$ is diffeomorphic to Fig. 4.

\begin{definition}
A \textbf{jewel} is a diagram $J$ such that:\newline
(1) it is not a twisted band diagram with $k+1=2$ and $a=\pm1$ or with $%
k+1=3 $ and $a=0$.\newline
(2) each Haseman circle of $J$ is boundary parallel.
\end{definition}

Fig. 5 depicts a jewel $\mathrm{J= (\Sigma, \Pi \cap \Sigma)}$ where $\Sigma$
is a planar surface with boundary $\partial \Sigma= \gamma_1 \cup \gamma_2
\cup \gamma_3 \cup \gamma_4$.

\begin{figure}[ht]
\centering  
\includegraphics[scale=.3]{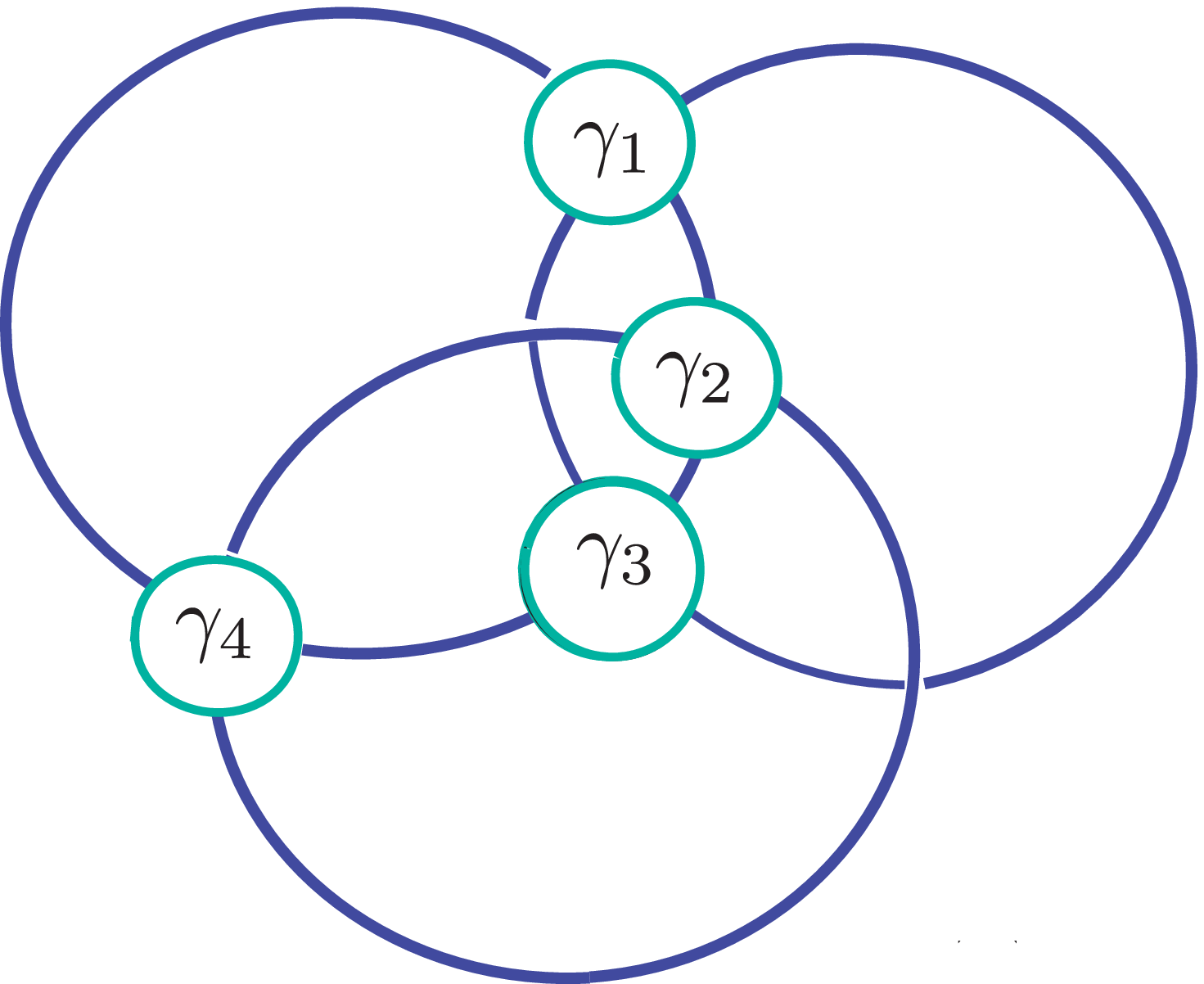}
\caption{A jewel}
\end{figure}

\subsection{Families of Haseman circles for a projection}

\subsubsection{Canonical Conway circles}

If not otherwise stated, the projections we consider are connected and prime.

\begin{definition}
Let $\Pi$ be a projection. A \textbf{family of Haseman circles} for $\Pi$ is
a set of Haseman circles satisfying the following conditions:\newline
(1) any two circles are disjoint and\newline
(2) no two circles are parallel.
\end{definition}

Let $\mathcal{H} = \{ \gamma_1 , ... , \gamma_n \}$ be a family of Haseman
circles for $\Pi$. Let $R$ be the closure of a connected component of $S^2
\setminus \bigcup_{i=1}^{i=n} \gamma_i$. We call the pair $(R , R \cap \Pi)$
a \textbf{diagram} of $\Pi$ determined by the family $\mathcal{H}$.

\begin{definition}
A family $\mathcal{C}$ of Haseman circles is an \textbf{admissible family}
if each diagram determined by it is either a twisted band diagram or a
jewel. An admissible family is \textbf{minimal} if removing a circle turns
it into a family that is not admissible.
\end{definition}

\noindent Theorem 2.1 is the main structure theorem about link projections
proved in (\cite{quwe0}, Theorem 1). It is essentially due to Bonahon and
Siebenmann.

\begin{figure}[h!]
\centering
\includegraphics[scale=.5]{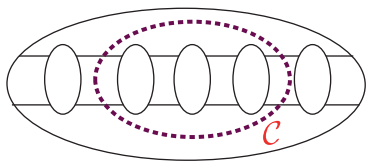}
\caption{A non canonical Conway circle }
\end{figure}
\vspace{.3cm}

\begin{theorem}
{(Existence and uniqueness theorem of minimal admissible families)} Let $\Pi$
be a link projection in $S^2$. Then: \newline
i) there exist minimal admissible families for $\Pi$; \newline
ii) any two minimal admissible families are isotopic by an isotopy
respecting $\Pi$.
\end{theorem}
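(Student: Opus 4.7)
The plan is to follow a Bonahon--Siebenmann style JSJ argument, translated from the $3$--dimensional setting into the purely $2$--dimensional language of Haseman circles.

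For existence (part i), I would first bound the size of any family of pairwise disjoint, pairwise non-parallel Haseman circles by a function of the crossing number of $\Pi$: a standard Euler characteristic / complexity argument shows that beyond a certain number of such circles one must be parallel to another. Pick then a \emph{maximal} such family $\mathcal{M}$. To see that $\mathcal{M}$ is already admissible, take a complementary region $R$ and any Haseman circle $\gamma\subset R$; by maximality of $\mathcal{M}$, either $\gamma$ is parallel to some circle of $\mathcal{M}$, or it is boundary parallel in $R$, or adding it to $\mathcal{M}$ yields a larger admissible family, a contradiction. So every Haseman circle of $(R,R\cap\Pi)$ is boundary parallel in $R$. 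A case analysis on the number of boundary components of $R$ and the total weight of its crossings then shows $(R,R\cap\Pi)$ is either a jewel or a twisted band diagram (the excluded TBDs with $(k+1,a)=(2,\pm 1)$ or $(3,0)$ are precisely those for which some interior Haseman circle would not be boundary parallel). Finally, starting from $\mathcal{M}$ I discard circles one at a time as long as the resulting family remains admissible; this process terminates and produces a minimal admissible family.

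For uniqueness (part ii), let $\mathcal{C}_1,\mathcal{C}_2$ be two minimal admissible families. Put them in transverse position and isotope, respecting $\Pi$, so that $|\mathcal{C}_1\cap\mathcal{C}_2|$ is minimized. The main step is an innermost-disc argument: assume some $\gamma\in\mathcal{C}_1$ meets some $\delta\in\mathcal{C}_2$, and pick an innermost bigon $D\subset S^2$ bounded by an arc $a\subset\gamma$ and an arc $b\subset\delta$ whose interior avoids $\mathcal{C}_1\cup\mathcal{C}_2$. Then $D$ lies inside a single diagram of $\mathcal{C}_2$ (a jewel or TBD). Because every Haseman circle of that diagram is boundary parallel, the arc $a$ must, together with a sub-arc of $\partial\Sigma$, cobound an annular region across which $\gamma$ can be pushed, eliminating at least two intersection points with $\delta$ and contradicting minimality of $|\mathcal{C}_1\cap\mathcal{C}_2|$. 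Once $\mathcal{C}_1$ and $\mathcal{C}_2$ are disjoint, each circle $\gamma\in\mathcal{C}_1$ lies in a diagram of $\mathcal{C}_2$; since that diagram is a jewel or TBD, $\gamma$ is boundary parallel, hence parallel (in $S^2$) to a circle of $\mathcal{C}_2$. By minimality of $\mathcal{C}_2$ applied symmetrically, this parallelism sets up a bijection between $\mathcal{C}_1$ and $\mathcal{C}_2$, and the collection of parallelism annuli assemble into the desired isotopy of $S^2$ taking $\mathcal{C}_1$ to $\mathcal{C}_2$ while respecting $\Pi$.

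The hardest part will be the innermost-bigon analysis in the uniqueness argument. Although each Haseman circle crosses $\Pi$ in only $4$ points away from crossings, the possible configurations of arcs of $\Gamma$ inside the bigon $D$ need a careful enumeration, split according to how the four intersection points of $\gamma$ and of $\delta$ with $\Pi$ are distributed on the two sides of $D$. Ruling out each configuration by exhibiting a reducing isotopy, and in particular doing so uniformly in both the jewel case and the twisted band diagram case, is the technical core of the proof; as in the $3$--dimensional JSJ decomposition it is precisely here that the hypothesis that each diagram is either a jewel or a TBD does the essential work.
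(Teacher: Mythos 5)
First, a point of comparison: the paper does not prove this theorem at all. It is imported verbatim from Quach Hongler--Weber (\cite{quwe0}, Theorem 1) and attributed to Bonahon--Siebenmann, so your proposal is being measured against that reference rather than against an argument in the present text. Your existence argument is essentially the standard one and is sound in outline: a maximal family $\mathcal{M}$ of disjoint, pairwise non-parallel Haseman circles exists by a complexity bound, every Haseman circle in a complementary region $R$ is then parallel to a circle of $\mathcal{M}$ and hence boundary parallel in $R$ (the parallelism annulus cannot contain another circle of $\mathcal{M}$), so $R$ is by definition a jewel unless it is one of the two excluded TBD types --- which are still TBDs, so $\mathcal{M}$ is admissible --- and discarding circles one at a time yields a minimal admissible family.

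The uniqueness argument, however, has a concrete gap beyond the deferred bigon enumeration. After making $\mathcal{C}_1$ and $\mathcal{C}_2$ disjoint, you assert that a circle $\gamma\in\mathcal{C}_1$ lying in a diagram of $\mathcal{C}_2$ is boundary parallel there ``since that diagram is a jewel or TBD.'' This is true for jewels by definition, but false for twisted band diagrams: a TBD of valency $k+1\geq 4$, or a spire with several crossings, contains Haseman circles that are not boundary parallel (e.g.\ a circle enclosing two adjacent boundary components together with the twist region between them). So the claimed bijection between $\mathcal{C}_1$ and $\mathcal{C}_2$ does not follow directly. What is needed at this point is an argument that a circle of $\mathcal{C}_1$ sitting non-boundary-parallel inside a TBD of $\mathcal{C}_2$ contradicts \emph{minimality} of one of the two families --- roughly, that such a circle either splits a TBD into two TBDs whose union is again a TBD (so the circle is removable) or forces a removable circle elsewhere; this is exactly where the exclusion of the types $(k+1,a)=(2,\pm 1)$ and $(3,0)$ from the definition of jewel earns its keep, and it is the step your sketch skips. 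Together with the unperformed case analysis inside the innermost bigon (where you must also check that pushing $\gamma$ across $D$ preserves the property of meeting $\Pi$ in exactly four points), the uniqueness half is not yet a proof.
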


\begin{definition}
An Haseman circle belonging to ``the" minimal admissible family of $\Pi$
noted $\mathcal{C}_{can}$ is called a \textbf{canonical Conway circle} of
the projection $\Pi$.

\begin{example}
The Haseman circle $C$ in Fig. 6 is not a canonical Conway circle.
\end{example}

The decomposition of $\Pi$ into twisted band diagrams and jewels determined
by $\mathcal{C}_{can}$ will be called the \textbf{canonical decomposition}
of $\Pi$. If there are no jewels in its canonical decomposition, the
projection $\Pi$ is said to be \textbf{arborescent}.
\end{definition}

\begin{figure}[h!]
\centering
\includegraphics[scale=.4]{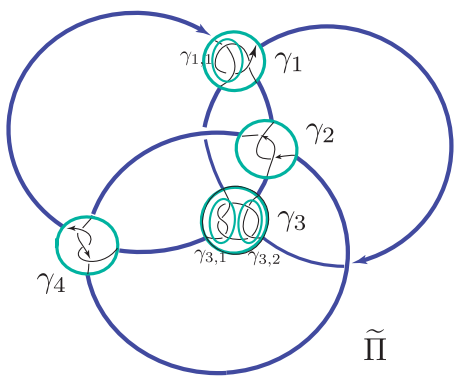}
\caption{A projection with its canonical Conway circles }
\end{figure}

A canonical Conway circle can be of 3 types:\newline
(1) a circle that separates two jewels.\newline
(2) a circle that separates two twisted band diagrams.\newline
(3) a circle that separates a jewel and a twisted band diagram.\newline

\begin{example}
Fig. 7 illustrates a projection $\widetilde \Pi$ with its canonical Conway
family: 
\begin{equation*}
\mathcal{C}_{can} (\widetilde \Pi)= \{ \gamma_1,\gamma_{1,1},\gamma_2,
\gamma_3,\gamma_{3,1},\gamma_{3,2}, \gamma_4 \}.
\end{equation*}
\end{example}

\begin{figure}[h!]
\centering
\includegraphics[scale=.4]{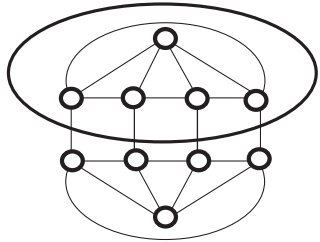}
\caption{$10^{***}$ is a tangle sum of two $6^*$}
\end{figure}
\vspace{.3cm}

\begin{remark}
\begin{enumerate}
\item As remarked in \cite{erquwe2}, our notion of jewel is more restrictive
than the notion of John Conway polyhedron (\cite{kaw} p. 139). We define a 
\textbf{jewel graph} $G_{J}$ of a jewel $J$ by collapsing each Haseman
circle of $J$ to a vertex. For John Conway, the graph of a basic polyhedron is
a simple regular graph of valency 4. A basic polyhedron can therefore be a
tangle sum of several jewel graphs. A jewel graph is simply a polyhedron in
the sense of John Conway, indecomposable with regard to the tangle sum. The
polyhedron $10^{\ast \ast \ast }$ has a non-trivial Haseman circle (see Fig.
8).

\item The minimal projection of the torus link of type $(2, m)$ can be
considered as a twisted band diagram with $k+1= 0$.
\end{enumerate}
\end{remark}

\subsubsection{Essential Conway circles}

Let $\Pi$ be a projection on $S^2$.

\begin{definition}
A (2-dimensional) tangle $\mathcal{T}$ of $\Pi $ is a pair $\mathcal{T}%
=(\Delta ,\tau _{\Delta })$ where $\Delta $ is a disk in $S^{2}$, $\tau
_{\Delta }$ is $\Pi \cap \Delta $ and the boundary $\partial \Delta $ of $%
\Delta $ intersects $\tau _{\Delta }$ exactly on 4 points. \newline
The \textbf{boundary} $\partial \mathcal{T}$ of $\mathcal{T}$ is the
boundary $\partial \Delta $ of $\Delta $.
\end{definition}

Note that a (2-dimensional) tangle is the projection onto the equatorial
disk of the 3-ball of a (3-dimensional) tangle which will be defined further
in Definition 3.4.

\begin{definition}
Two tangles $\mathcal{T}=(\Delta ,\tau _{\Delta })$ and $\mathcal{T}{%
^{\prime }}=(\Delta ,\tau _{\Delta }^{\prime })$ are \textbf{isotopic} if
there exists a homeomorphism $f:\mathcal{T}\rightarrow \mathcal{T}{^{\prime }%
}$ such that:\newline
(1) $f$ is the identity on the boundary $\partial \Delta $ and \newline
(2) $f(\tau _{\Delta })=\tau _{\Delta }^{\prime }$.
\end{definition}

\vspace{.2cm}

\begin{definition}
A \textbf{rational tangle} is a tangle such that all its \textbf{canonical
Conway circles} are concentric and delimit twisted annuli, with the
exception of the innermost circle which is the boundary of a spire, as shown
in Fig. 9.\newline
A \textbf{maximal rational tangle} of a link projection $\Pi$ is a rational
tangle that is not strictly included in a larger rational tangle of $\Pi$.
\end{definition}

Let $\mathcal{T}$ be a rational tangle. We now consider $\mathcal{T}$ under
the \textbf{cardan} form $\mathrm{T}[a_{0},\dots ,a_{m}]$ (or equivalently
under the standard form described in \cite{kala}) illustrated in Fig. 9 such
that the twisted band diagrams have weights $b_{i}=(-1)^{i}a_{i}$ with $%
i=0,\dots ,m$ and such that the first weight band $b_{0}$ is horizontal.

To $\mathrm{T}[a_0, \dots, a_m]$ where $a_0 \in \mathbb{Z}$ and $a_1, \dots
,a_m \in \mathbb{Z}-\{0 \}$, we assign the continued fraction 
\begin{equation*}
[a_0, a_2, \cdots a_m]=a_0+ \dfrac{1}{a_1 +\dfrac{1}{{\ddots} +\dfrac{1}{a_m}%
}}
\end{equation*}

If $\mathcal{T}$ is not the trivial tangle $\mathrm{T([\infty ])}$ (Fig. $%
1(a)$), the rational number ${\frac{r}{s}}=[a_{0},a_{2},\cdots a_{m}]$ with $%
(r,s)=1$ and $r>0$ is called the \textbf{fraction} $\mathrm{F(}\mathcal{T}%
\mathrm{)}$.\newline
By convention, the fraction of the trivial tangle $\mathrm{T([\infty ])}$ is: 
$\mathrm{F(T([\infty ]):=\infty }$.

The fraction is an isotopy invariant of the tangle $\mathcal{T}$. It means
that with the expansion of $\frac{r}{s}$ in another continuous fraction $%
[d_{0},\dots ,d_{k}]$, we get another cardan tangle $\mathrm{T}%
[d_{0},\dots ,d_{k}]$ isotopic to $\mathrm{T}[a_{0},\dots ,a_{m}]$.
We will use $\mathrm{T}_{\frac{r }{s}}$ to denote a rational tangle with fraction $\frac{r}{s}$.

\begin{figure}[ht]
\centering
\includegraphics[scale=0.4]{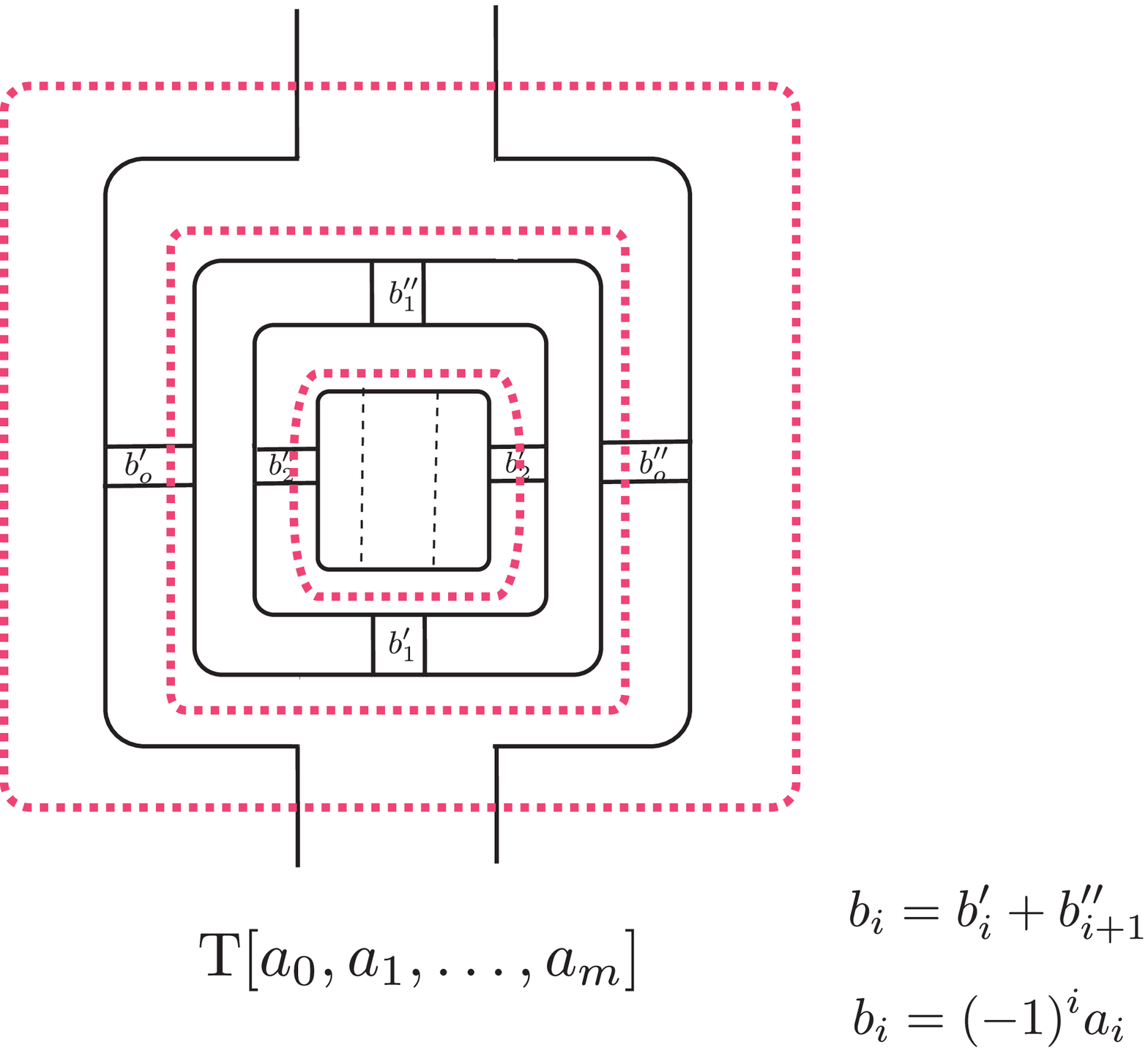}
\caption{$\mathrm{T}[a_0, \dots, a_m]$}
\end{figure}
\vspace{.3cm}

\begin{remark}
Let ${\frac{r }{s}}$ be a rational number with $r>0$ and $(r,s)=1$. Then ${%
\frac{r }{s}}$ has an expansion $[a_0, \dots,a_m]$ where the $a_i$'s are all positive or all
negative; it is called a {\bf homogeneous continued fraction}.
 If furthermore, $a_0$ and $a_m$ are not equal to $\pm 1$, the
continued fraction $[a_0, \dots,a_m]$ is said to be \textbf{strictly homogeneous}%
. If $[a_0, \dots, a_m]$ is a homogeneous continued fraction, the
cardan tangle $\mathrm{T}[a_0, \dots, a_m]$ is an alternating
tangle.
\end{remark}

\begin{definition}
An \textbf{essential Conway circle} of an alternating projection $\Pi $ is a
canonical Conway circle that is not properly contained in a maximal rational
tangle.
\end{definition}

In a rational link projection, there are no essential Conway circles.

Let $\Pi $ be a non-rational link projection. By removing from the minimal
admissible family $\mathcal{C}_{can}$ of $\Pi $ all concentric Conway
circles of each maximal rational tangle $\mathcal{T}$ of $(S^{2},\Pi )$
except its boundary circle $\partial \mathcal{T}$, we obtain the \textbf{%
essential Conway family} of $\Pi $ denoted $\mathcal{C}_{ess}(\Pi )$.

\begin{remark}
The set of essential Conway circles is empty for an alternating projection
if only if the projection is one of the three following cases:\newline
a) a standard torus knot projection of type $(2, s)$ (in this case it can be considered
as a twisted band diagram with empty boundary (Remark 2.2.2)), \newline
b) a jewel without boundary \newline
c) a minimal projection of a rational knot.
\end{remark}

\begin{figure}[ht]
\centering
\includegraphics[scale=.35]{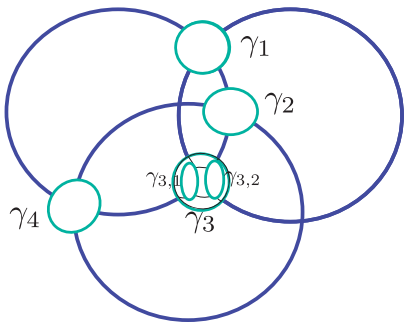}
\caption{The essential Conway family of the projection $\protect\widetilde %
\Pi$ illustrated in Fig. 7}
\end{figure}

\begin{figure}[h!]
\centering
\includegraphics[scale=.4]{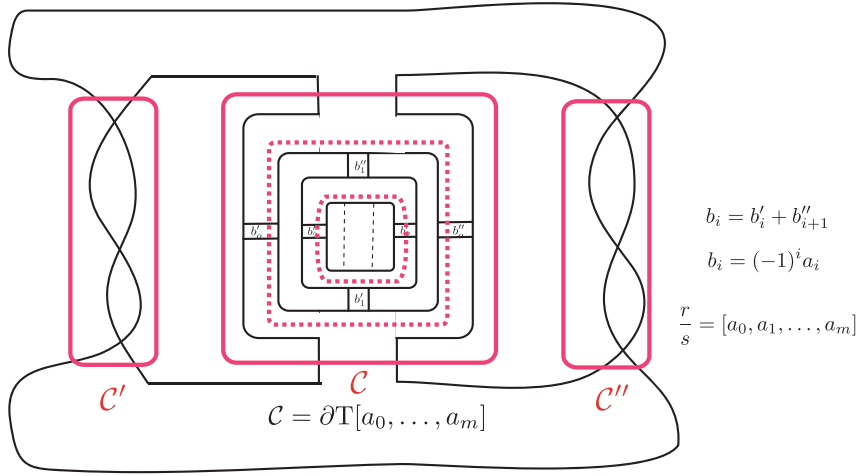}
\caption{A projection with its set $\mathcal{C}_{ess}=\{\mathcal{C}$, $%
\mathcal{C^{\prime }}$, $\mathcal{C^{\prime \prime }\}}$}
\end{figure}

\begin{example}
1) Fig. 10 illustrates the essential Conway family $\mathcal{C}%
_{ess}(\widetilde \Pi)$ of the projection $\widetilde \Pi$ of Fig. 7: 
\begin{equation*}
\mathcal{C}_{ess}(\widetilde \Pi) =\{ \gamma_1,\gamma_2,
\gamma_3,\gamma_{3,1},\gamma_{3,2},\gamma_4 \}.
\end{equation*}

2) In the projection depicted in Fig. 11, $\mathcal{C^{\prime }}$, $\mathcal{%
C^{\prime \prime }}$ and $\mathcal{C}=\partial\mathrm{T}[a_0, \dots, a_m]$
are essential Conway circles while dotted circles are just canonical Conway
circles.
\end{example}

\subsection{Canonical and Essential Structure Trees}

We now focus the canonical decomposition of alternating link projections.

\subsubsection{Flypes and Flyping Theorem}

Let $\Pi $ be a $n$-crossing regular projection of a link $L\hookrightarrow
S^{3}$ on the projection plane $S^{2}$. As in \cite{meth}, consider $n$
disjoint small \textquotedblleft crossing ball" neighbourhoods $B_{1},\dots
,B_{n}$ of the crossing points $c_{1},\dots c_{n}$ of $\Pi $. Then assume
that $L$ coincides with $\Pi $, except that inside each $B_{i}$ the two arcs
forming $\alpha (c_{i})=\Pi \cap B_{i}$ are perturbed vertically to form
semicircular overcrossing and undercrossing arcs, which lie on the boundary
of $B_{i}$. This relationship between the link $L$ and its projection $\Pi $
is expressed as $L=\lambda (\Pi )$ (\cite{meth}). Note that there is a
homeomorphism of pairs $(S^{3},L)\rightarrow (S^{3},\lambda (\Pi ))$. We call $\lambda (\Pi )$ a \textbf{realized projection} (or a realized diagram in the terms of \cite{boy}) for $L$.

We can consider the ambient space $S^{3}$ to be ${\mathbb{R}^{3}}\cup
\{\infty \}$, and we shall take the 2-sphere $S^{2}$ on which the regular
projection $\Pi $ lies to be $S^{2}=\{\mathbf{x}\in \mathbb{R}%
^{3}:\left\Vert \mathbf{x}\right\Vert =1\}$. We assume that the knot $%
\lambda (\Pi )$ lies within the neighborhood $N=\{x\in \mathbb{R}^{3}:{\frac{%
1}{2}}\leq \left\Vert {\mathbf{x}}\right\Vert \leq {\frac{3}{2}}\}$.

\begin{definition}
Let $g:(S^{3},\lambda (\Pi _{1}))\rightarrow (S^{3},\lambda (\Pi _{2}))$ be
a homeomorphism of pairs. The homeomorphism of pairs $g$ is \textbf{flat} if 
$g$ is pairwise isotopic to a homeomorphism of pairs $h$ with the condition
that $h$ maps $N$ onto itself and $h|_{N}=h_{0}\times id_{[{\frac{1}{2}},{%
\frac{3}{2}}]}$ for some orientation-preserving homeomorphism $%
h_{0}:S^{2}\rightarrow S^{2}$. We call $h_{0}$ the \textbf{principal part
of the flat homeomorphism} $g$.
\end{definition}

By asking the crossing balls to be sent on the crossing balls, a flat
homeomorphism of pairs is an isomorphism of realized projections as defined in 
\cite{boy}.

\begin{definition}
An \textbf{isomorphism of realized projections} $h:\lambda (\Pi )\rightarrow
\lambda (\Pi _{\ast })$ is a homeomorphism of pairs $h:(S^{3},L)\rightarrow (%
\tilde{S}^{3},\widetilde{L})$ such that:\newline
(1) $h(S^{2})={S}^{2}$,\newline
(2) $h(B_{i})=\widetilde{B}_{i}$ and \newline
(3) $h(\alpha (c_{i}))=\alpha (\tilde{c}_{i})$.
\end{definition}

We recall the definition of a flype as described in \cite{meth}.

\begin{definition}
Let $\Pi _{1}$ be a projection with the pattern described in Fig. 12(a). A 
\textbf{standard flype} of $(S^{3},\lambda (\Pi _{1}))$ is
any homeomorphism $f$ which maps $(S^{3},\lambda (\Pi _{1}))$ to a pair $%
(S^{3},\lambda (\Pi _{2}))$ where $\Pi _{2}$ is the pattern described in
Fig. 12(b), in such a way that:\newline
(1) $f$ sends the 3-ball $B_{\mathrm{A}}$ into itself by a rigid rotation
about an axis in the projection plane,\newline
(2) $f$ fixes pointwise the 3-ball $B_{\mathrm{B}}$,\newline
(3) $f$ moves the crossing visible on the left of Fig. 12(a) to the crossing
visible on the right of Fig. 12(b).
\end{definition}

\begin{definition}
Let $\Pi _{1}$ be any projection. Then a flype is any homeomorphism $%
f:(S^{3},\lambda (\Pi _{1}))\rightarrow (S^{3},\lambda (\Pi _{1}))$ of the
form $f=g_{1}\circ f^{\prime }\circ g_{2}$ where $f^{\prime }$ is a standard
flype and $g_{1}$ and $g_{2}$ are flat homeomorphisms.
\end{definition}

If the tangle $A$ of Fig. 12 contains no crossing, then the standard flype determined by that figure is a flat homeomorphism; therefore
according to the above definition, any flat homeomorphism is a flype. We call it a \textbf{trivial flype}.

A 2-dimensional description of a standard flype, as in Fig. 12, is
sufficient since it corresponds to a unique standard flype up to isomorphism
of realized diagrams. The crossing that moves during the (2-dimensional)
flype is an active crossing point of the flype.

\textbf{Terminology}. If there is no possible confusion, we will also
designate the (2-dimensional) flype by flype.

\begin{figure}[h!]
\centering
\includegraphics[scale=.3]{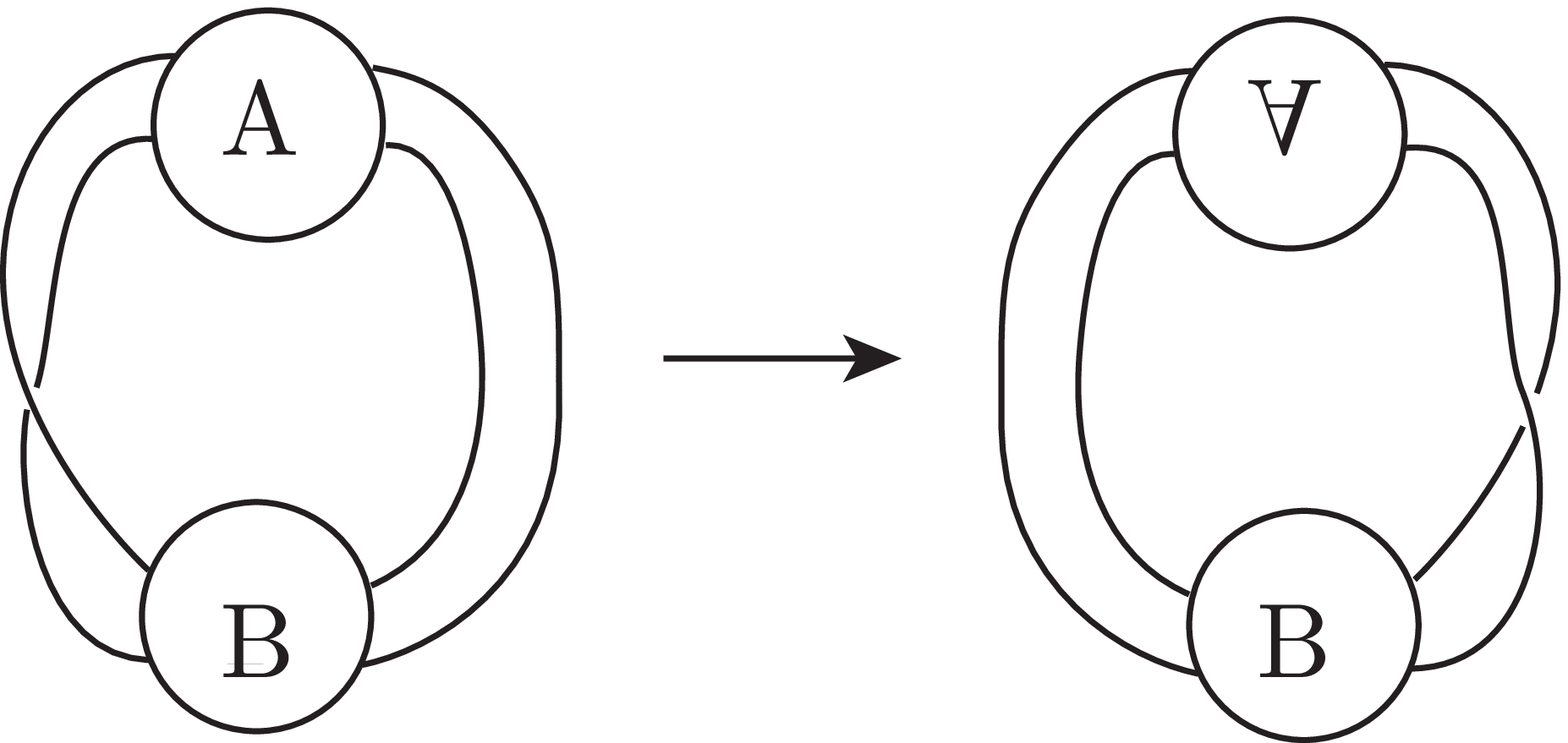}
\caption{A flype}
\end{figure}

We can now precisely locate where flypes can be performed with respect to
the canonical Conway decomposition of a prime alternating reduced link.

\begin{theorem}
\cite{quwe0} (Position of flypes) Let $\Pi$ be a prime alternating reduced
link projection in $S^2$ and suppose that a flype can be done in $\Pi$.
Then, its active crossing point belongs to a diagram determined by $\mathcal{%
C}_{can}(\Pi)$. The flype moves the active crossing point either within the twist
region to which it belongs or to another twist region of the same twisted
band diagram.
\end{theorem}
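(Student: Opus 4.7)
The plan is to exploit the fact that a flype's data provides a local Conway-circle structure around the active crossing $c$ which is incompatible with $c$ lying in a jewel; once $c$ is placed inside a TBD, the flype motion is then shown to remain inside that TBD.

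First I would extract the Haseman-circle data from the flype. A standard flype is specified by a $3$-ball $B_A$ whose equatorial disk $\Delta\subset S^2$ contains the tangle $A$ together with $c$, with $c$ sitting immediately adjacent to $\partial\Delta$ in the pattern of Fig.~12. This yields two nested Haseman circles: $\gamma=\partial\Delta$ and a smaller circle $\gamma'\subset\Delta$ enclosing only $A$. The annulus between $\gamma'$ and $\gamma$ contains exactly $c$, in the configuration of a one-crossing twist region. So every flype comes with a built-in ``twist-region witness'' around $c$.

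Next I would locate $c$ in the canonical decomposition. Let $D$ be the diagram of $\mathcal{C}_{can}(\Pi)$ containing $c$, and claim that $D$ is a TBD. Suppose instead that $D$ were a jewel. After a small isotopy we may take $\gamma$ and $\gamma'$ to be Haseman circles of $D$. By Definition 2.6 each would have to be boundary parallel, cobounding a Fig.~4 annulus with a boundary component of $D$. But then the one-crossing annulus between $\gamma'$ and $\gamma$, glued to those boundary-parallel annuli, exhibits a genuine twist-region piece sitting in $D$ which the minimal admissible family $\mathcal{C}_{can}$ must have absorbed; that is, $\mathcal{C}_{can}$ would have to include $\gamma$ or $\gamma'$, so $D$ would not have been a jewel in the first place. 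This contradicts the uniqueness clause of Theorem 2.1, forcing $D$ to be a TBD.

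With $D$ a TBD, $c$ lies in some twist region $T$ of $D$, and the flype rotates $\Delta$ by $180^\circ$. If $\gamma$ and $\gamma'$ lie entirely in the interior of $T$, the flype simply rearranges $c$ within $T$. If instead $\gamma$ or $\gamma'$ coincides with a canonical Conway circle of $D$ separating two twist regions of the same TBD, then $c$ migrates across that circle into the neighbouring twist region of $D$. In either case $c$ does not leave $D$, because the flype's outer circle $\gamma$ is contained in the closed subsurface underlying $D$ or coincides with one of its boundary components.

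The crux of the argument is the jewel-exclusion step. Making it rigorous requires a careful boundary-parallelism analysis played against the twist-region witness, and in particular handling the borderline cases in which $\gamma$ or $\gamma'$ is parallel (rather than equal) to a boundary component of $D$. One leans here on the minimality clause of Theorem 2.1 to force a redundant Haseman circle inside a would-be jewel to reveal an absorbable twist region, and on the primeness and reducedness of $\Pi$ to exclude the degenerate Reidemeister-I and composite configurations.
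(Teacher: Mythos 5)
First, note that the paper does not prove this statement at all: it is imported verbatim from \cite{quwe0}, so there is no internal proof to compare against, and any argument here must essentially reconstruct the one in that reference. Your overall plan (extract the two nested Haseman circles $\gamma\supset\gamma'$ from the flype data, use the one-crossing annulus between them to exclude a jewel, then track the crossing inside the resulting TBD) is the right shape, but it has a genuine gap exactly where you flag ``the crux''. The step ``after a small isotopy we may take $\gamma$ and $\gamma'$ to be Haseman circles of $D$'' is unjustified and is the entire content of the theorem: the tangle $A$ of Fig.~12 can be large and can straddle many diagrams of the canonical decomposition, so $\gamma$ and $\gamma'$ may intersect circles of $\mathcal{C}_{can}(\Pi)$ in an essential way. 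One must prove that they can be isotoped off $\mathcal{C}_{can}(\Pi)$ \emph{while preserving the one-crossing annulus between them}, and handle the cases where one of them becomes parallel to (or coincides with) a canonical circle or is compressible (e.g.\ when $A$ is trivial, in which case the flype is flat and excluded). Without this normalization, neither the jewel exclusion nor the final claim that the crossing lands in ``another twist region of the same TBD'' is established; your closing assertion that ``$\gamma$ is contained in the closed subsurface underlying $D$'' is precisely what needs proof.

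Second, the mechanism you give for the jewel exclusion is not right. You argue that $\mathcal{C}_{can}$ ``would have to include $\gamma$ or $\gamma'$'' because the twist region must be ``absorbed''; but minimality of the admissible family only says circles cannot be removed, it never forces additional Haseman circles to be adjoined, so no contradiction with uniqueness arises this way. The correct contradiction, once $\gamma$ and $\gamma'$ are known to lie in the jewel $D$, is structural: Definition 2.6(2) forces each of them to be boundary parallel, so $D$ decomposes as two trivial collars glued to the one-crossing annulus, i.e.\ $D$ is a twisted band diagram with $k+1=2$ and $a=\pm1$, which Definition 2.6(1) explicitly excludes from being a jewel. Repairing the proposal therefore requires (i) the normalization lemma for arbitrary Haseman circles against $\mathcal{C}_{can}$, and (ii) replacing the minimality argument by the structural one just described.
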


\begin{figure}[h!]
\centering
\includegraphics[scale=.4]{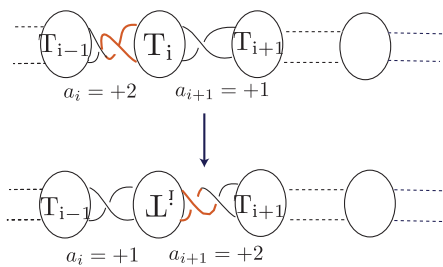}
\caption{ An efficient flype }
\end{figure}

\begin{remark}
(1) We are only interested in \textbf{efficient} flypes that move the active
crossing point from one twist region to another in the same twist band
diagram.

(2) Let $\mathcal{T}$ be a TBD of $\Pi$ and $C_1, C_2, \dots, C_k, C_1$ be
its cyclic sequence of $\mathcal{C}_{can}|_{\mathcal{T}}$. A flype on $%
\mathcal{T}$ does not modify the order of occurence of the Conway circles $%
\mathcal{C}_{can}|_{\mathcal{T}}$ in the cyclic sequence.
\end{remark}

\begin{definition}
(1) The set of the twist regions of a given twisted band diagram is called a 
\textbf{flype orbit} (Fig. 14).\newline
(2) The cardinal of $\mathcal{C}_{can}|_{\mathcal{T}}$ is the \textbf{valency%
} $k+1$ of the TBD $\mathcal{T}$.
\end{definition}

\begin{figure}[h!]
\centering
\includegraphics[scale=.3]{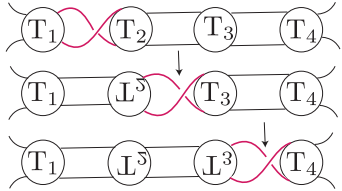}
\caption{ A flype orbit }
\end{figure}

\begin{corollary}
\cite{quwe0}\newline
(1) A flype moves an active crossing point inside the flype orbit to which
it belongs.\newline
(2) Two distinct flype orbits are disjoint.
\end{corollary}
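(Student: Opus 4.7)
The plan is to derive both parts essentially as a direct packaging of Theorem 2.2 together with the definitions of canonical decomposition and flype orbit.

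For part (1), I start from the active crossing point $c$ of the flype. By Theorem 2.2, $c$ lies in some diagram $\mathcal{T}$ of the canonical decomposition determined by $\mathcal{C}_{can}(\Pi)$, and moreover $\mathcal{T}$ must be a twisted band diagram, because jewels contain no crossings that can be displaced in this way (every crossing of a jewel sits in a region bounded by boundary-parallel Haseman circles, and no twist region is available outside it). Let $R$ be the twist region of $\mathcal{T}$ containing $c$. Theorem 2.2 then says that the flype either keeps $c$ inside $R$ or sends it to another twist region $R'$ of the \emph{same} TBD $\mathcal{T}$. By Definition 2.12(1), the flype orbit of $c$ is exactly the set of twist regions of $\mathcal{T}$, so in both cases the image of $c$ belongs to the same flype orbit. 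This is the content of (1).

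For part (2), I would argue that the flype orbits are in bijective correspondence with the TBDs appearing in the canonical decomposition of $\Pi$. The canonical Conway family $\mathcal{C}_{can}(\Pi)$ cuts $S^{2}$ into diagrams whose interiors are pairwise disjoint; the twist regions of any given TBD $\mathcal{T}$ lie in the interior of $\mathcal{T}$, hence two TBDs $\mathcal{T}\neq \mathcal{T}'$ have disjoint collections of twist regions. Since the flype orbits attached to $\mathcal{T}$ and $\mathcal{T}'$ are by definition these two disjoint collections, distinct flype orbits are disjoint.

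The only subtlety I anticipate, and what I would treat as the main point rather than a real obstacle, is ensuring that the ``flype orbit'' structure is well-defined independently of the choice of minimal admissible family. This is immediate from the uniqueness clause of Theorem 2.1, which guarantees that $\mathcal{C}_{can}(\Pi)$, and therefore the decomposition into TBDs and jewels, is unique up to an isotopy of $S^{2}$ respecting $\Pi$; under such an isotopy the partition of crossings into twist regions, hence into flype orbits, is preserved. Once this is noted, both statements reduce to bookkeeping on Theorem 2.2 and no further argument is required.
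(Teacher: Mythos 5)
Your proposal is correct and follows exactly the route the paper intends: the statement is presented as a corollary of Theorem 2.2 (Position of flypes) together with Definition 2.12, and the paper itself gives no further proof beyond citing \cite{quwe0}, so unpacking those two ingredients plus the uniqueness clause of Theorem 2.1 is precisely what is expected. The only superfluous step is your separate argument that the active crossing cannot lie in a jewel --- the conclusion of Theorem 2.2 already places it in a twist region of a twisted band diagram, so that case distinction is not needed.
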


This implies that an active crossing point belongs to one and only one TBD.
Since two TBD have at most one canonical Conway circle, Corollary 2.1 can be
interpreted as a loose kind of commutativity of flypes.

\subsubsection{Canonical Structure Tree ${\mathcal{A}}(L)$.}

Fundamental to our purposes is the following Menasco-Thistlethwaite Flyping
Theorem \cite{meth}:

Let $\Pi _{1}$ and $\Pi _{2}$ be two reduced, prime, oriented, alternating
projections of links. If $f:(S^{3},\lambda (\Pi _{1}))\rightarrow
(S^{3},\lambda (\Pi _{2}))$ is a homeomorphism of pairs, then $f$ is a
composition of flypes and flat homeomorphisms.

Since two realizations of alternating projections in $S^{2}$ of the same
isotopy class of an oriented prime alternating link in $S^{3}$ are related by
flypes, their canonical and essential \textbf{structure trees} constructed
as described below, are isomorphic.

\textit{Construction of the canonical structure tree ${\mathcal{A}}(L)$}.

Let $L$ be a prime alternating link and let $\Pi $ be an alternating
projection of $L$. Let $\mathcal{C}_{can}$ be the canonical Conway family
for $\Pi $. We construct the canonical structure tree ${\mathcal{A}}(L)$ as
follows: its vertices are in bijection with the diagrams determined by $%
\mathcal{C}_{can}$ and its edges are in bijection with the canonical circles;
 the vertices of each edge represent the diagrams which both have in their boundary, the canonical
circle $\gamma $ corresponding to the edge. Since $S^{2}$ has genus zero,
the constructed graph is a tree. \newline
We label the vertices of ${\mathcal{A}}(L)$ as follows: if a vertex
represents a twisted band diagram, we label it by its total weight $a$ and
if it represents a jewel, we label it with the letter $J$.

In the case of a tangle $\mathcal{T}$ whose boundary is a canonical Conway
circle $\gamma $, the canonical structure tree ${\mathcal{A}}(\mathcal{T})$
of $\mathcal{T}$ is a graph such that all its edges have two vertices at the
extremities except for an\textquotedblleft open" edge (with a single vertex)
which represents the circle $\gamma $. For an example, see Fig. 16.

\begin{proposition}
The canonical structure tree ${\mathcal{A}}(L)$ is independent of the
alternating projection chosen to represent $L$.
\end{proposition}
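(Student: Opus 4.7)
The plan is to invoke the Menasco--Thistlethwaite Flyping Theorem recalled just above: any two reduced alternating projections $\Pi_1,\Pi_2$ of $L$ are connected by a finite sequence of flypes and flat homeomorphisms. It will therefore suffice to show that one move of either kind induces a labeled isomorphism between the canonical structure trees built from source and target.

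First I would dispatch flat homeomorphisms. The principal part $h_0$ of such a map is an orientation-preserving self-homeomorphism of $S^2$ sending $\Pi_1$ onto $\Pi_2$, so $h_0(\mathcal{C}_{can}(\Pi_1))$ is an admissible family of Haseman circles for $\Pi_2$, with minimality inherited through $h_0$. By the uniqueness clause of Theorem 2.1 this family is isotopic through $\Pi_2$ to $\mathcal{C}_{can}(\Pi_2)$. The resulting bijection sends jewels to jewels, TBDs to TBDs of the same valency, and preserves the total weight $a=\sum a_i$ of each TBD, hence yields the desired labeled tree isomorphism.

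Next I would treat a single (non-trivial) flype $f$, producing $\Pi_2$ from $\Pi_1$. By Theorem 2.2 the active crossing point sits in one TBD $\mathcal{T}$ of the canonical decomposition of $\Pi_1$, and Corollary 2.1 together with Remark 2.3(2) confine its motion to another twist region of the same flype orbit, the cyclic order of $\mathcal{C}_{can}|_{\mathcal{T}}$ around $\partial \mathcal{T}$ being unchanged. Outside $\mathcal{T}$ the flype is the identity, so $f(\mathcal{C}_{can}(\Pi_1))$ is an admissible family for $\Pi_2$ which, by the uniqueness clause of Theorem 2.1, coincides up to isotopy through $\Pi_2$ with $\mathcal{C}_{can}(\Pi_2)$. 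The bijection so obtained plainly preserves TBD/jewel labels and valencies, and the total weight of each TBD is preserved because a flype merely transports signed crossings among twist regions of one flype orbit (and all these crossings share the same sign by the standing convention on TBDs), so the count and sign are unaffected.

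The main obstacle is the verification that the image of a canonical family under a flype remains \emph{minimal} admissible, i.e.\ that none of the image circles becomes parallel, boundary parallel, or compressible after the move. This is precisely what Theorem 2.2 and Corollary 2.1 rule out: they pin the flype so tightly to one flype orbit inside one TBD that the separating role of every canonical circle, together with the combinatorial type of the decomposition near $\mathcal{T}$, is automatically preserved, and the argument closes by appealing to the uniqueness part of Theorem 2.1.
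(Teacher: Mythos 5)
Your argument is correct and takes essentially the same route as the paper, whose proof simply cites the existence--uniqueness theorem for minimal admissible families, asserts that a flype or a flat homeomorphism does not modify the canonical structure tree, and concludes via the Flyping Theorem; you are merely supplying the details behind that one-line assertion. (One small imprecision: a flype is not the identity outside the TBD $\mathcal{T}$ --- it rigidly rotates the flipped tangle --- but since that rotation is a homeomorphism of pairs it still carries the canonical family to a minimal admissible family, so your conclusion stands.)
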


\begin{proof} Let $\Pi$ be an alternating link projection in $S^2$. By \cite{quwe0}, Theorem 1:\\
i) there exist minimal Conway families for $\Pi$ and \\
ii) any two minimal Conway families are isotopic, by an isotopy which respects $\Pi$.

A flype or a flat homeomorphism does not modify the canonical structure tree. 
By Flyping Theorem, we conclude that the canonical structure tree is independent of the chosen alternating projection $\Pi$ and we can speak of the canonical structure tree of $L$ (and not only of $\Pi$).
\end{proof}

\begin{definition}
The alternating knot $K$ is \textbf{arborescent} if each vertex of ${%
\mathcal{A}} (K)$ has an integer weight.
\end{definition}

\begin{example}
The link $K_0$ which has the projection $\widetilde \Pi$ represented by Fig.
7, has its canonical structure tree ${\mathcal{A}} ( K_0)$ given by Fig. $%
15(a)$.
\end{example}

\begin{figure}[tbp]
\centering
\includegraphics[scale=.26]{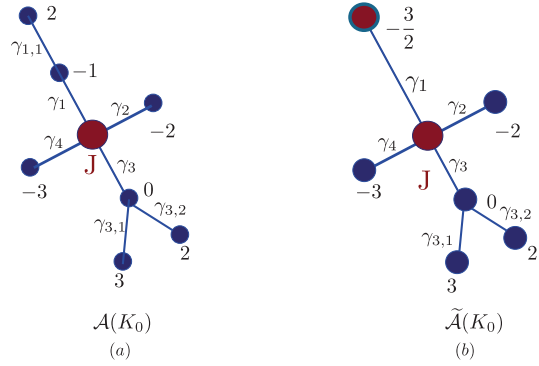}
\caption{Canonical structure tree and essential structure tree of $K_0$}
\end{figure}

\begin{remark}
If the projection $\Pi$ is arborescent, we can encode $\Pi$ with a \textbf{%
weighted planar tree} {\`a} la Bonahon-Siebenman (\S 5 in \cite{quwe0})
which is a canonical structure tree with more complete information.
\end{remark}


\subsubsection{Essential Structure Tree $\mathcal{\tilde{A}}(L)$.}

\textit{Construction of the essential structure tree $\mathcal{\tilde{A}}(L)$}.

On the same lines of the construction of the canonical structure tree ${%
\mathcal{A}}(L)$, we construct the \textbf{essential structure tree} $%
\widetilde{\mathcal{A}}(L)$. The vertices of $\widetilde{\mathcal{A}}$ are
in bijection with the diagrams determined by the set $\mathcal{C}_{ess}(\Pi
) $ and the edges are in bijection with the circles of $\mathcal{C}_{ess}(\Pi )$%
. The extremities of an edge corresponding to an essential Conway circle $%
\gamma $ are two vertices associated to the two diagrams having $\gamma $ in
their boundary.

As in the case with the canonical structure tree, Flyping Theorem implies
that:

\begin{proposition}
The essential structure tree ${\mathcal{\widetilde{A}}}(L)$ is independent
of the minimal projection chosen to represent $K$.
\end{proposition}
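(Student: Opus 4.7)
The plan is to mirror the proof of Proposition 2.2 by invoking the Menasco--Thistlethwaite Flyping Theorem and verifying that the two moves relating any two reduced alternating projections of $L$ --- flypes and flat homeomorphisms --- preserve the essential structure tree.

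For a flat homeomorphism, the principal part $h_{0}:S^{2}\to S^{2}$ sends one alternating projection to another, carries canonical Conway families to canonical Conway families, and evidently respects the characterization of rational and maximal rational tangles; hence it maps $\mathcal{C}_{ess}(\Pi_{1})$ to $\mathcal{C}_{ess}(\Pi_{2})$ and induces an isomorphism of essential structure trees. For a flype, Theorem 2.2 together with Remark 2.3(2) guarantees that the active crossing stays inside a single twisted band diagram $\mathcal{T}$ of the canonical decomposition and that the cyclic sequence of circles in $\mathcal{C}_{can}|_{\mathcal{T}}$ is unchanged; in particular, the canonical structure tree $\mathcal{A}(L)$ itself is preserved, as already observed in the proof of Proposition 2.2.

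The main point to verify carefully --- and what I expect to be the only real step --- is that a flype also preserves the subfamily of $\mathcal{C}_{can}$ consisting of essential circles. By the definition of rational tangle, a maximal rational tangle corresponds combinatorially to a maximal ``linear'' chain of canonical Conway circles delimiting twisted annuli and ending at a spire, while the essential Conway circles are precisely the canonical circles that are not properly contained in any such chain. Since a flype acts within a single twisted band diagram and preserves the cyclic order of its canonical Conway circles, every such maximal chain is carried to a maximal chain, so $\mathcal{C}_{ess}$ is preserved up to isotopy respecting $\Pi$. Equivalently, the passage from $\mathcal{A}(L)$ to $\widetilde{\mathcal{A}}(L)$ is the purely combinatorial operation of contracting each maximal rational subtree to a single vertex; applied to the already invariant tree $\mathcal{A}(L)$ of Proposition 2.2, this yields an invariant $\widetilde{\mathcal{A}}(L)$ depending only on the isotopy class of $L$.
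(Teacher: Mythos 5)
Your argument is correct and is essentially the paper's own proof: the paper merely remarks that, as for the canonical structure tree, the Flyping Theorem yields the statement, and your added verification that flypes and flat homeomorphisms carry maximal rational tangles to maximal rational tangles---so that $\widetilde{\mathcal{A}}(L)$ is just the combinatorial contraction of the already-invariant labeled tree $\mathcal{A}(L)$---supplies exactly the step the paper leaves implicit. The only blemish is that your internal references are shifted relative to the paper's numbering (the canonical-tree invariance is Proposition 2.1, and the cyclic-order observation is Remark 2.5(2), not 2.3(2)).
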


\textbf{The essential structure tree of a tangle}: To a tangle $\mathcal{T}$
with an essential Conway circle $\gamma $ as boundary, we associate an
essential structure tree denoted $\widetilde{\mathcal{A}}(\mathcal{T})$
which has only one \textquotedblleft open edge" with one vertex-end. The
unique edge of $\widetilde{\mathcal{A}}(\mathcal{T})$
corresponds to $\gamma $.

\begin{remark}
\begin{enumerate}
\item If $\mathrm{T}_{\frac{p }{q}}$ is a maximal rational tangle, $\widetilde {%
\mathcal{A}} ({\mathrm{T}_{\frac{p }{q}}})$ is a linear graph composed only with an
``open" edge and one vertex labelled by ${\frac{p }{q}}$.

\item A vertex in ${\mathcal{\widetilde{A}}}(K)$ with weight $\in \mathbb{Q}%
\setminus \mathbb{Z}$ is monovalent and its union with its single edge
corresponds to a maximal rational tangle in $\Pi $.

\item Only monovalent vertices of an essential structure tree of a link can
have weights that are $\in \mathbb{Q}\setminus \mathbb{Z}$
\end{enumerate}
\end{remark}

The essential structure tree ${\mathcal{\widetilde A}} (\mathrm{T}_{\frac{r }{s}})$
is reduced to a vertex of weight $\frac{r }{s}$.

\begin{figure}[h!]
\centering  
\includegraphics[scale=0.3]{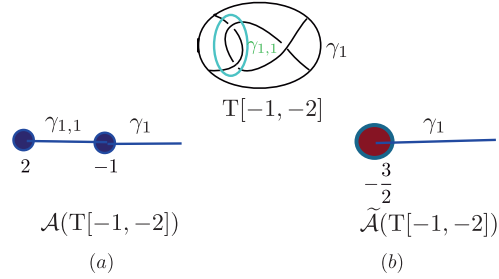}
\caption{$(a)$ Canonical structure tree of $\mathcal{A}(\mathrm{T}[-1,-2])$
and $(b)$ Essential structure tree of $\protect\widetilde {\mathcal{A}} (%
\mathrm{T}[-1,-2])$}
\end{figure}

\begin{example}
$\mathcal{A}(\mathrm{T}[-1,-2])$ and $\widetilde{\mathcal{A}} (\mathrm{T}%
[-1,-2])$ are described in Fig. 16.
\end{example}

\begin{remark}
The essential structure tree ${\mathcal{\widetilde A}} (K)$ is reduced to a
single vertex if and only if $K$ is a rational link $\mathrm{T}_{\frac{r }{s}}$ or is a link described by a jewel without boundary.
\end{remark}

\section{On Visibility Theorem 3.1}

This section is about the proof of the Visibility Theorem 3.1 for $q$%
-periodic alternating prime knots:

\begin{theorem}
Let $K$ be an oriented prime alternating knot that is $q$-periodic with $%
q\geq 3$. Then there exists a $q$-periodic alternating projection $%
\widetilde{\Pi }$ for $K$.\newline
\end{theorem}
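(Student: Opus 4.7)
The plan is to start from any reduced alternating projection $\Pi$ of $K$ and to modify it by flypes until it becomes invariant under the order-$q$ symmetry $\rho$ of $(S^3,K)$. Since $\rho(\Pi)$ is again a reduced alternating projection of the same link, the Menasco--Thistlethwaite Flyping Theorem produces a finite sequence of flypes and flat homeomorphisms carrying $\Pi$ to $\rho(\Pi)$. The strategy is to exploit this rigidity in order to upgrade $\rho$ to an honest symmetry of some alternating projection lying in the flype class of $\Pi$.

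The first step is to normalize the action of $\rho$ on the projection sphere. By Kerekjarto's theorem the principal part of $\rho$ on $S^2$ is topologically conjugate to a standard rotation of order $q$, so after a flat conjugation I may assume that $\rho$ acts on $S^2$ as a literal rotation. The canonical Conway family $\mathcal{C}_{can}(\Pi)$ is unique up to isotopy respecting $\Pi$ (Theorem 2.1), and the canonical and essential structure trees are invariants of $K$ (Propositions 2.1 and 2.2); hence $\rho$ induces simplicial automorphisms of $\mathcal{A}(K)$ and $\widetilde{\mathcal{A}}(K)$. Using the isotopy clause of Theorem 2.1, I can further arrange that $\mathcal{C}_{can}(\Pi)$ is setwise preserved by $\rho$, so that $\rho$ permutes jewels and twisted band diagrams compatibly with the induced tree automorphism.

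This decomposes the canonical decomposition into $\langle\rho\rangle$-orbits of diagrams. By Theorem 2.2 all flype activity is confined to TBDs, so a jewel has no internal flype freedom; a jewel-orbit, whether free or stabilized by some $\rho^{s}$, is therefore handled simply by propagating a chosen representative around the orbit via $\rho$. For a TBD whose $\langle\rho\rangle$-orbit is free, I pick any flype normalization of one representative and again spread it around the orbit by $\rho$, so equivariance is automatic.

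The main obstacle, which concentrates the technical heart of the argument, is a TBD $\mathcal{T}$ fixed by some nontrivial $\rho^{s}$ of order $d\mid q$. This stabilizer acts cyclically on $\mathcal{C}_{can}|_{\mathcal{T}}$, and by Corollary 2.1 together with Remark 2.3 flypes inside $\mathcal{T}$ freely redistribute crossings among its twist regions while preserving both their cyclic order and the common sign of the weights. The crux is to show that the total weight of $\mathcal{T}$ can always be split among its twist regions so that the intermediate weights $a_1,\dots,a_{k+1}$ become $d$-cyclically invariant; this forces a compatibility between $d$, the valency $k+1$, and the total weight $a$, and it is precisely here that the hypothesis $q\geq 3$ and the alternation assumption do real work. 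Once this equidistribution has been carried out, $\rho^{q/d}$ acts as a literal rotation of $\mathcal{T}$, and assembling the equivariant pieces across all $\langle\rho\rangle$-orbits produces the desired $q$-periodic alternating projection $\widetilde{\Pi}$.
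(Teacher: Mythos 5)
Your overall plan --- realize the symmetry on a realized projection via the Flyping Theorem, propagate tangles around free orbits by flypes, and treat stabilized pieces separately --- is the same skeleton as the paper's proof, but two of your steps have genuine gaps. First, the opening normalization is circular: you cannot ``after a flat conjugation assume that $\rho$ acts on $S^2$ as a literal rotation.'' The map induced on the realized projection, $\Phi_\Pi = h\circ\Phi\circ h^{-1}$, is only a composition $\phi\circ F$ of a flat homeomorphism with flypes; when $F$ is nontrivial, $\Phi_\Pi$ is not flat and $\phi$ need not have finite order, so Ker\'ekj\'arto's theorem does not apply to it globally. Arranging for the symmetry to act as an actual rotation of $S^2$ preserving an alternating projection is precisely the conclusion of the theorem; the paper applies Ker\'ekj\'arto only to restrictions to diagrams in which no flypes can occur (the central jewel, or the flat part when $F=\mathrm{id}$).

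Second, you are missing the structural lemma that makes the orbit analysis work and that is where the hypothesis $q\geq 3$ actually enters: if a canonical or essential Conway circle is invariant under $\Phi_\Pi$ then $q=2$ (Lemma 3.1), hence for $q\geq 3$ the fixed subtree of the induced automorphism $\widetilde{\varPhi}$ of $\widetilde{\mathcal{A}}(K)$ contains no edge and is a single vertex $V_0$ (Proposition 3.1, Corollary 3.3). This reduces the ``stabilized diagram'' problem to exactly one central jewel or TBD and guarantees that all remaining boundary circles and tangles fall into free $q$-orbits, where your propagation argument is valid. Without it, your case analysis must confront jewels stabilized by a nontrivial $\rho^{s}$ --- which cannot be ``handled simply by propagating a chosen representative,'' since the representative itself would have to carry an internal symmetry --- and several TBDs with various stabilizers $d\mid q$, for which your claimed ``compatibility between $d$, the valency $k+1$ and the total weight $a$'' is asserted but never proved; you also misattribute the role of $q\geq 3$ to this equidistribution step. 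In the paper the relevant divisibility ($mq$ crossings and $sq$ boundary circles for the single invariant TBD) is deduced from the freeness of the induced action on crossings and on the adjacent edges, which again rests on Lemma 3.1. Finally, locating the two fixed points of the eventual rotation inside the unique fixed diagram $V_0$ is what makes the assembled equivariant pieces fit together as a rotation of $S^2$; your proposal never identifies where these fixed points live.
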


We first recall the definition of a $q$-periodic knot in $S^3$.

\begin{definition}
A knot $K$ is $q$-periodic if there is a (auto)-homeomorphism $\Phi $ of
pairs $(S^{3},K)$ of period $q$ which satisfies the following conditions:%
\newline
(1) $\Phi $ is a $\frac{2\pi }{q}$-rotation about a \textquotedblleft line"
(circle) $\alpha $ in $S^{3}$ and \newline
(2) $\alpha \cap K=\varnothing $.\newline
$\Phi $ is called a $q$-homeomorphism of $(S^{3},K)$.
\end{definition}

\begin{remark}
Let $K$ be a $q$-periodic knot with $\Phi$ a $q$-homeomorphism of $(S^{3},K)$%
. For each divisor $p$ of $q$, $K$ is $p$-periodic with its $p$-homeomorphism $\Psi _{r}=\Phi ^{r}$ where $r={\frac{q}{p}}$.
\end{remark}

For our ends, we introduce the following notion:

\begin{definition}
 1) If a knot $K$ is $q$-periodic but not \textbf{strictly $q$-periodic} then $K$
is $r q$-periodic for some $r \geq 2$.\\
 2) If a projection $\Pi$ is $q$-periodic but not strictly $q$-periodic then $\Pi$ 
is $r q$-periodic for some $r \geq 2$.
\end{definition}

\begin{figure}[tbp]
\centering
\includegraphics[scale=.3]{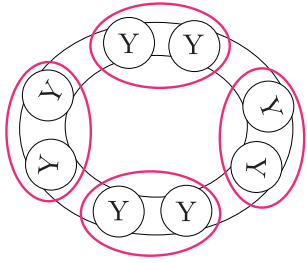}
\caption{A non-strictly 4-periodic projection}
\end{figure}

\begin{example}
Let $\Pi $ be an alternating projection described in Fig.17 where $\mathrm{Y}
$ is an alternating tangle with boundary an essential Conway circle. The big
red circles do not belong to $\mathcal{C}_{ess}(\Pi )$ and hence do not
appear on $\mathcal{\ }\tilde{A}(K)$. The projection $\Pi $ is
 not strictly 4-periodic since as shown by Fig. 17, $\Pi $
is an 8-periodic projection.\newline
Note that the projection $\Pi $ depicted in Fig.17 is not a knot projection,
regardless the tangle $\mathrm{Y}$ (see Proposition 3.2 below).
\end{example}

From now on, by \newline

\centerline {{\bf $q$-periodic projections}, we mean {\bf strictly
$q$-periodic projections}.}

Our objective being the study of periodicity, we reformulate the Flyping
theorem in the following form:

\begin{theorem}
 Let $\Phi:(S^{3},K)\rightarrow (S^{3},K)$ be an orientation preserving homeomorphism
of pairs where $K$ is a prime alternating knot. Let $\lambda (\Pi )$ be a
realized projection of a reduced alternating projection $\Pi$ of $K$ and $h:(S^{3},K)\rightarrow (S^{3},\lambda (\Pi ))$ be
a homeomorphism of pairs. Then the isomorphism of realized projections $\Phi _{\Pi }=h$ $\circ $ 
$\Phi $ $\circ $ $h^{-1}:(S^{3},\lambda (\Pi ))\rightarrow
(S^{3},\lambda (\Pi ))$ can be expressed as $\Phi _{\Pi }=\phi \circ F$ where
$\phi$ is a flat homeomorphism and $F$ is a composition of
standard flypes on $\lambda (\Pi )$ unless $F$ is the identity.
\end{theorem}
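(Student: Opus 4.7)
The plan is to apply the Menasco-Thistlethwaite Flyping Theorem (recalled at the start of \S 2.3.2) directly to the self-isomorphism $\Phi_{\Pi} : (S^{3}, \lambda(\Pi)) \to (S^{3}, \lambda(\Pi))$ of the realized projection $\lambda(\Pi)$, and then to reorganize the output so that every flat piece sits on the left and every standard flype on the right. Since $\Pi$ is a reduced alternating projection of the prime alternating knot $K$, the theorem applied with $\Pi_{1} = \Pi_{2} = \Pi$ gives a factorization $\Phi_{\Pi} = A_{n} \circ \cdots \circ A_{1}$ in which each $A_{i}$ is a flype or a flat homeomorphism. Grouping consecutive flat factors (which are closed under composition) and expanding every flype $A_{i}$ according to Definition 2.10 as $g_{1} \circ f' \circ g_{2}$ with $f'$ standard and $g_{1}, g_{2}$ flat, I obtain an alternating decomposition
\[
\Phi_{\Pi} \;=\; \phi_{m} \circ f'_{m} \circ \phi_{m-1} \circ \cdots \circ f'_{1} \circ \phi_{0}
\]
where each $\phi_{i}$ is flat and each $f'_{j}$ is a standard flype on $\lambda(\Pi)$.

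The key step is then to push every flat factor leftwards across the standard flypes using the commutation relation $f' \circ \phi = \phi \circ \tilde{f}'$ with $\tilde{f}' := \phi^{-1} \circ f' \circ \phi$. To make this useful I need to argue that $\tilde{f}'$ is again a standard flype of $\lambda(\Pi)$ and not merely a conjugate homeomorphism. A standard flype is determined by a splitting of $(S^{3}, \lambda(\Pi))$ into two 3-ball tangles $B_{\mathrm{A}}, B_{\mathrm{B}}$, an active crossing and a flype axis in the projection plane, together with the rigid rotation of $B_{\mathrm{A}}$; a flat homeomorphism, being pairwise isotopic to a product $h_{0} \times \mathrm{id}_{[1/2,3/2]}$ on the neighborhood $N$ of $S^{2}$, transports these data consistently via its principal part $h_{0}$ to an equivalent standard-flype pattern on $\lambda(\Pi)$. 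Applying this commutation repeatedly, the flat factors collapse into a single flat homeomorphism $\phi = \phi_{m} \circ \cdots \circ \phi_{0}$ on the left, while the standard flypes aggregate into a composition $F$ on the right, producing $\Phi_{\Pi} = \phi \circ F$.

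If no standard flype appears in the decomposition (that is, the Flyping Theorem already exhibits $\Phi_{\Pi}$ as a pure composition of flat homeomorphisms), then $F = \mathrm{id}$ and $\phi = \Phi_{\Pi}$, which accounts for the exceptional case in the statement. I expect the main technical obstacle to lie in justifying the commutation relation: one must verify carefully, from Definitions 2.8-2.9, that the conjugate of a standard flype by a flat homeomorphism can be brought back into the normal form of Definition 2.9 by a pairwise isotopy that introduces no further flypes. Once this point is settled, the reorganization into the desired form $\phi \circ F$ is purely formal bookkeeping.
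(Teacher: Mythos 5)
Your proposal is correct and follows essentially the same route as the paper: the authors also apply the Menasco--Thistlethwaite Flyping Theorem to $\Phi_{\Pi}$ and then invoke the ``essential commutation'' $f\circ h=h\circ f^{\prime}$ with $f^{\prime}=h^{-1}\circ f\circ h$ again a standard flype (their Remark 3.2) to push all flat factors to the left and obtain $\Phi_{\Pi}=\phi\circ F$. Your discussion of why the conjugate of a standard flype by a flat homeomorphism is again a standard flype is in fact more explicit than what the paper records.
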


\begin{remark}
\begin{enumerate}
\item For our purposes, we separate the flat homeomorphisms from the flypes. Therefore, the
standard flypes involved in Flyping Theorem 3.2 above are not trivial. 
\newline
\item Standard flypes
and flat homeomorphisms \textquotedblleft essentially" commute. 
By \textquotedblleft essentially" commute, we mean that if $f$ is a
standard flype and $h$ is a flat homeomorphism, then $f\circ h=h\circ
f^{\prime }$ where $f^{\prime }=h^{-1}\circ f\circ h$ is also a standard
flype. Then it is possible to express $\Phi _{\Pi }=\phi \circ F$.

\end{enumerate}
\end{remark}

Let $K\subset S^{3}$ be a prime (strictly) $q$-periodic alternating knot
with $\Phi :(S^{3},K)\rightarrow (S^{3},K)$ its corresponding rotation about an axis of
order $q$. Let $\Pi $ be a reduced alternating projection of $K$ and $\lambda (\Pi )$ its
realized diagram. By Flyping Theorem, $\Phi $ is conjugate
through maps of pairs to an isomorphism $\Phi _{\Pi }$ on  
$\lambda (\Pi )$ (onto itself) which is a composition of a
 flat homeomorphism with standard flypes.
  
\begin{definition}
An \textbf{essential Conway sphere} of $\Pi $ is a 2-dimensional sphere
lying in the interior of $N=\{x\in \mathbb{R}^{3}:{\frac{1}{2}}\leq
\left\Vert {\mathbf{x}}\right\Vert \leq {\frac{3}{2}}\}$ such that its
projection on the projection plane is an essential Conway circle.\newline
\end{definition}
Consider the set $\mathcal{C}_{ess}(\Pi )$ of essential Conway circles of $%
\Pi $ and its corresponding set $\mathcal{S}_{ess}(\Pi )$ of essential
Conway spheres.

A  2-dimensional tangle in Definition 2.10 is the projection
on the equatorial disk of the 3-ball $B$ of a 3-dimensional tangle defined
as follows:

\begin{definition}
A 3-dimensional tangle is a pair $(B,t)$, where $B$ is a 3-ball and t is a
proper 1-submanifold of $B$ meeting $\partial B$ in four points. We say that
two tangles are equivalent if they are homeomorphic as pairs.\\ We say that $%
(B,t)$ is trivial if it is equivalent to the pair $(B_{0},t_{0})$, where $%
B_{0}=\{{(x_{1},x_{2},x_{3})\in \mathbb{R}^{3}:x_{1}^{2}+x_{2}^{2}+x_{3}^{2}%
\leq 1\}}$ and $t_{0}$ consists of the points of $B_{0}$ for which $x_{1}\in
\{{1/2,-1/2}\}$ and $x_{3}=0$.
\end{definition}

From such a tangle diagram $T$, we can create a 3-dimensional tangle $\lambda
(T)$ by means of a suitably small vertical perturbation near each crossing
of the diagram; the ambient space of $\lambda (T)$ is considered as a 3-ball
for which the disk region of $T$ is an equatorial slice.

\begin{corollary}
$\Phi _{\Pi }$ induces a permutation $\sigma _{\Phi _{\Pi }}$ on the set of
essential Conway spheres $\mathcal{S}_{ess}(\Pi )$ such that $\sigma _{\Phi
_{\Pi }}^{q}$ is the identity permutation.
\end{corollary}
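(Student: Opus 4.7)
The plan is to leverage the Flyping Theorem 3.2 decomposition $\Phi_{\Pi} = \phi \circ F$ and show that each factor permutes the essential Conway spheres (at least up to ambient isotopy rel $\lambda(\Pi)$), after which the order-$q$ property will follow by conjugation. The starting observation is that $\Phi_{\Pi}$ is a self-homeomorphism of the pair $(S^3,\lambda(\Pi))$, hence restricts to a homeomorphism of the tubular neighborhood $N$ taking $2$-spheres in $\mathrm{int}(N)$ to $2$-spheres in $\mathrm{int}(N)$; to control their projections we replace the elements of $\mathcal{S}_{ess}(\Pi)$ by equisingular isotopy classes, turning $\mathcal{S}_{ess}(\Pi)$ into a finite set on which $\Phi_{\Pi}$ acts.

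For the flat factor $\phi$, Definition 2.14 produces a principal part $h_0:S^2\to S^2$ which is an orientation-preserving homeomorphism of the projection sphere sending $\Pi$ to itself. By the uniqueness clause of Theorem 2.1, $h_0(\mathcal{C}_{can}(\Pi))$ is isotopic (through $\Pi$-respecting isotopies) to $\mathcal{C}_{can}(\Pi)$, so $h_0$ induces a permutation of $\mathcal{C}_{can}(\Pi)$. Moreover $h_0$ sends maximal rational tangles of $\Pi$ to maximal rational tangles (these are intrinsic to the pair $(S^2,\Pi)$), hence maps boundary circles of such tangles to boundary circles of such tangles. Combined with Definition 2.12 this shows $\phi$ permutes $\mathcal{C}_{ess}(\Pi)$; applying the vertical product structure on $N$ lifts this permutation to $\mathcal{S}_{ess}(\Pi)$.

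For the flype factor, write $F = f_1 \circ \cdots \circ f_r$ with each $f_i$ a standard flype. By Theorem 2.2 and Corollary 2.1 an active crossing of a flype is confined to a single twisted band diagram of the canonical decomposition, and the cyclic order of $\mathcal{C}_{can}|_{\mathcal{T}}$ on its containing TBD is preserved. Consequently each $f_i$ maps the canonical decomposition of $\Pi$ to a canonical decomposition of the underlying realized projection in which the TBDs and jewels, and in particular the maximal rational tangles, are matched up. Since essential Conway circles are precisely the canonical Conway circles that are not properly interior to a maximal rational tangle (Definition 2.12), this property is preserved by $f_i$. After all $r$ flypes we return to $\lambda(\Pi)$, so $F$ induces a self-permutation of $\mathcal{S}_{ess}(\Pi)$. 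Composing with $\phi$ yields the desired permutation $\sigma_{\Phi_{\Pi}}$.

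Finally, since $\Phi$ has order $q$ on $(S^3,K)$ and $\Phi_{\Pi} = h \circ \Phi \circ h^{-1}$, we have $\Phi_{\Pi}^{\,q} = h \circ \Phi^{q} \circ h^{-1} = \mathrm{id}_{(S^3,\lambda(\Pi))}$, and hence $\sigma_{\Phi_{\Pi}}^{\,q} = \mathrm{id}$. The main obstacle is the second paragraph's step: one must argue that the set $\mathcal{S}_{ess}(\Pi)$ is well defined enough (as isotopy classes of $2$-spheres in $N$ projecting to $\mathcal{C}_{ess}(\Pi)$) that the manipulations of flypes and flat maps descend to an honest set-theoretic permutation, rather than merely a coarser correspondence; everything else is a bookkeeping consequence of Theorems 2.1, 2.2 and 3.2.
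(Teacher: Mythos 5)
Your argument is correct and follows essentially the same route as the paper's: the key ingredients in both are the uniqueness (up to $\Pi$-respecting isotopy) of the essential Conway family from Theorem 2.1, the fact that $\Phi_{\Pi}$ respects the realized-projection structure, and the conjugation identity $\Phi_{\Pi}^{q}=h\circ \Phi^{q}\circ h^{-1}=\mathrm{id}$. The paper's own proof is much terser --- it just cites that $\Phi_{\Pi}$ permutes the crossing balls and that existence and uniqueness of $\mathcal{C}_{ess}(\Pi)$ lift to the essential Conway spheres --- while your factor-by-factor check of $\phi$ and of each standard flype (via Theorem 2.2 and Corollary 2.1) supplies exactly the details that the uniqueness statement is standing in for; the only slip is the phrase asserting that $F$ alone returns to $\lambda(\Pi)$, when in fact only the full composite $\phi\circ F$ does, which does not affect the conclusion.
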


\begin {proof} 
The corollary is deduced from the following facts:\\
1) $\Phi _{\Pi }$ sends crossing balls to crossing balls and therefore induces a non trivial permutation $\sigma (\Phi _{\Pi })$ on the set of crossing balls of $\lambda (\Pi )$ and likewise a non trivial permutation of the set of crossings of $\Pi $  such that $\sigma (\Phi _{\Pi })^q$ is the identity permutation.\\
2) the existence and unicity of the family of essential Conway circles ${\cal C}_{ess}(\Pi)$ of $\Pi$ (Theorem 2.1) imply the existence and unicity (up isotopy) of the family of essential Conway spheres. 
\end {proof}

Denote by $\tilde{\varPhi}$ the automorphism induced by $\Phi_\Pi$ on the
tree $\tilde{ \mathcal{A}} (K)$.

\begin{corollary}
The automorphism $\tilde{\varPhi}$ on $\widetilde{\mathcal{A}}(K)$ satisfies 
$\tilde{\varPhi}^q=\mathrm{Identity}$.
\end{corollary}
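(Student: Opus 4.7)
The plan is to derive $\tilde\varPhi^{q}=\mathrm{Identity}$ as a direct consequence of the preceding corollary together with the very construction of $\widetilde{\mathcal{A}}(K)$. Recall that, by definition, the edges of $\widetilde{\mathcal{A}}(K)$ are in bijection with the essential Conway circles in $\mathcal{C}_{ess}(\Pi)$, and the vertices are in bijection with the diagrams determined by $\mathcal{C}_{ess}(\Pi)$, i.e.\ with the connected components of $S^{2}\setminus\bigcup_{\gamma\in\mathcal{C}_{ess}(\Pi)}\gamma$.

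First I would treat the edges. The preceding corollary states that $\Phi_{\Pi}$ induces a permutation $\sigma_{\Phi_{\Pi}}$ on $\mathcal{S}_{ess}(\Pi)$ (equivalently, on $\mathcal{C}_{ess}(\Pi)$) satisfying $\sigma_{\Phi_{\Pi}}^{q}=\mathrm{id}$. Through the edge-bijection, this is precisely the action of $\tilde\varPhi$ on the edge set of $\widetilde{\mathcal{A}}(K)$, so $\tilde\varPhi^{q}$ fixes every edge.

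Next I would turn to the vertices. Since $\Phi_{\Pi}$ is an isomorphism of realized projections, it sends $S^{2}$ to itself and respects the essential Conway decomposition. By the first step, $\Phi_{\Pi}^{q}$ fixes each circle of $\mathcal{C}_{ess}(\Pi)$ setwise, hence stabilises the union $\bigcup_{\gamma\in\mathcal{C}_{ess}(\Pi)}\gamma$ and permutes the connected components of its complement; because each such component is delimited by a fixed collection of essential Conway circles, each component must be sent to itself. Consequently, $\tilde\varPhi^{q}$ also acts as the identity on the vertex set of $\widetilde{\mathcal{A}}(K)$.

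A tree automorphism that fixes every vertex and every edge is the identity, which gives $\tilde\varPhi^{q}=\mathrm{Identity}$. The only mild obstacle is the passage from ``$\Phi_{\Pi}^{q}$ fixes each essential Conway circle'' to ``$\Phi_{\Pi}^{q}$ fixes each diagram''; this is immediate from the fact that the diagrams are precisely the connected components of $S^{2}\setminus\bigcup\mathcal{C}_{ess}(\Pi)$ and that $\Phi_{\Pi}^{q}$ is a self-homeomorphism of $S^{2}$ preserving the ambient decomposition. A shorter alternative would be to argue directly that $\Phi_{\Pi}=h\circ\Phi\circ h^{-1}$ inherits order $q$ from $\Phi$, so the functorial induced tree automorphism satisfies $\tilde\varPhi^{q}=\mathrm{id}$; I would mention this in a closing remark for completeness.
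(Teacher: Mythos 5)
Your argument is correct and takes essentially the same route as the paper's own proof: both deduce from the preceding corollary that the induced permutation of $\mathcal{C}_{ess}(\Pi)$, i.e.\ of the edge set of $\widetilde{\mathcal{A}}(K)$, has $q$-th power equal to the identity, and conclude that $\tilde{\varPhi}^{q}$ is trivial. Your additional care with the vertex set (together with the closing observation that $\Phi_{\Pi}=h\circ \Phi\circ h^{-1}$ literally has order $q$, which also covers the one-edge tree, where two diagrams share the same boundary circle and fixing the edge alone would not force fixing the vertices) only makes the argument more complete than the published two-line proof.
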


\begin{proof} 
By Corollary 3.1, the permutation $\sigma(\Phi_\Pi)^q$ on ${\cal C}_{ess}(\Pi)$ is the identity permutation on ${\cal C}_{ess}(\Pi)$. Hence it induces the identity on the essential structure tree $\widetilde{\mathcal{A}}(K)$.
\end{proof}

\subsection{Visibility of the $q$-periodicity of alternating knots on $S^2$.}

We now define the notion of visibility of a $q$-periodicity of an
alternating knot.

\begin{definition}
Let $K$ be an alternating $q$-periodic knot. The $q$-periodicity of $K$ is 
\textbf{visible} if $K$ displays the $q$-periodicity of $K$ as a $\frac{2\Pi 
}{q}$-rotation on an \textbf{alternating projection} called a $q$-visible
projection.
\end{definition}

In \S 3.2 and \S 3.3, we will describe how the $q$-periodicity of an
alternating knot $K$ acts on the structure trees by
studying how it is reflected on the set of essential Conway circles as
well as on the diagrams of $\Pi (K)$.

According to Flyping Theorem, we have two cases:

(1) Suppose no flypes are needed. Hence $\Phi _{\Pi }=\phi $ is flat and $\phi ^{q}=%
\mathrm{Id}$. By Ker\'{e}kj\'{a}rto's theorem (\cite{coko}), the principal part of $%
\phi $ which is a homeomorphism of $(S^{2},\Pi )$ is topologically conjugate to a
rotation of $S^{2}$ of order $q$ without fixed points on $\Pi $. Consequently,
the $q$-periodicity of $K$ is visible on an alternating projection $\Pi $ of 
$K$.

\begin{remark}
\begin{enumerate}
\item If $(S^{2},\Pi )$ is a $q$-periodic jewel without boundary, its $q$%
-periodicity is visible on $\Pi $ because the jewels have no TBD and then no
flypes are necessary to realize $\Phi _{\Pi }$.

\item A torus knot of type $(2,q)$ ($q$ must therefore be odd) displays the $q$%
-periodicity on a standard alternating projection.
\end{enumerate}
\end{remark}

\vspace{.2cm}

(2) In what follows, we will deal with the case where flypes may be involved.

\subsubsection{Action of $\Phi _{\Pi }$ on Structure Trees.}

Let $\Pi $ be a reduced alternating projection of $K$ and $\mathcal{C}_{ess}(\Pi )$ 
be its set of essential Conway circles. Suppose further that $\mathcal{C}_{ess}(\Pi )$ is not empty.

Since the essential structure trees $\widetilde{\mathcal{A}}(K)$ and $\widetilde{\mathcal{A}}(\Phi _{\Pi }(K))$ of a prime
oriented alternating $q$-periodic knot $K$ are isomorphic graphs, we can
interpret this isomorphism as an automorphism $\widetilde{\varPhi}$ of the
essential structure tree $\widetilde{\mathcal{A}}(K)$.

Since the graph $\widetilde{\mathcal{A}}(K)$ is a tree, the fixed point set $%
Fix (\widetilde \varPhi) $ is a non-empty subtree. So we have two
possibilities:

\begin{enumerate}
\item Case where $Fix (\widetilde \varPhi)$ contains an edge $E$, corresponding to a Conway circle which is $\Phi _{\Pi }$ invariant.

\item Case where $Fix (\widetilde \varPhi)$ is reduced to a vertex $V_0$.
\end{enumerate}

\begin{remark}
If $\widetilde{\mathcal{A}}(K)$ is reduced to a single vertex $V_{0}$ then $%
K$ is either a rational link or a link corresponding to a jewel without
boundary and the automorphism $\widetilde{\varPhi}$ is obviously the identity
map.
\end{remark}

\subsection{$Fix (\protect\widetilde \varPhi)$ and the essential
decomposition of $(S^2,\Pi)$}

\begin{figure}[h!]
\centering
\includegraphics[scale=.355]{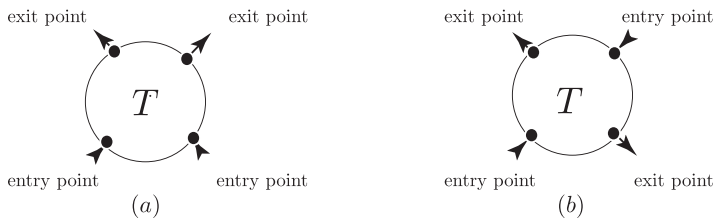}
\caption{Orientation of the boundary points}
\end{figure}

In order to describe the two cases of $Fix (\widetilde \varPhi)$ stated in 
\S 3.1.1 in terms of the essential decomposition of $(S^2,\Pi)$, let us first
describe how the boundary points of a tangle are oriented.

Let $\mathcal{T}$ be a tangle of a projection $\Pi $. The intersection
points of $\partial \mathcal{T}\cap \Pi $ are called the \textbf{boundary
points} of $\mathcal{T}$. By the orientation and the connectivity of $\Pi $,
the four boundary points of $\mathcal{T}$ are oriented such that two are entry
points and both others are exit points (see Fig. 18). Up to a change in the global orientation
of the strands and up to a rotation of angle $\frac{\pi }{2}$, we have the
two possible configurations described in Fig. 18.

\begin{lemma}
Let $\gamma $ be a canonical or essential Conway circle of $\Pi $. If $%
\gamma $ is $\Phi _{\Pi }$-invariant then $q=2$.
\end{lemma}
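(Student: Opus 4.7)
The natural strategy is to pass from the two-dimensional Conway circle $\gamma \subset S^2$ to the associated three-dimensional Conway sphere $\Sigma \subset S^3$ and to exploit the fact that $\Phi$ is a genuine rotation of $S^3$ whose fixed set is a single circle $\alpha$ disjoint from $K$. After conjugating by $h$, the invariance of $\gamma$ under $\Phi_\Pi$ translates to the $\Phi$-invariance (up to isotopy) of the $3$-dimensional Conway sphere $\Sigma$ projecting onto $\gamma$; by a standard equivariant isotopy for finite cyclic actions I will assume $\Phi(\Sigma) = \Sigma$ as a set. Two cases then arise, according as $\Phi$ swaps or preserves the two complementary $3$-balls of $\Sigma$. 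In the swapping case, $\Phi|_\Sigma$ is an orientation-reversing finite-order homeomorphism of $S^2$, hence by Ker\'ekj\'art\'o's theorem conjugate to a reflection, so of order $2$; thus $\Phi^2$ fixes $\Sigma$ pointwise. Since $\mathrm{Fix}(\Phi^k) = \alpha$ for every nontrivial power $\Phi^k$ and $\alpha$ is $1$-dimensional, $\Phi^2$ must be the identity, which yields $q = 2$.

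The preserving case requires a closer look at the four points of $\Sigma \cap K$. Here $\Phi|_\Sigma$ is conjugate to a rotation of $S^2$ of some order $d \mid q$ and, when $d \geq 2$, its only fixed points on $\Sigma$ are the two intersections $\alpha \cap \Sigma$, which lie off $K$. Hence every $\langle \Phi|_\Sigma \rangle$-orbit inside $\Sigma \cap K$ has size $\geq 2$, so the orbit decomposition of $4$ is either a single orbit of size $4$ or two orbits of size $2$. A key preliminary observation is that since $\alpha \cap K = \varnothing$, the restriction $\Phi|_K$ is a fixed-point-free finite-order self-homeomorphism of the circle $K$, hence a rotation; in particular $\Phi$ preserves the orientation of $K$, so entry points of the tangle are sent to entry points and exit points to exit points in the standard $2+2$ boundary pattern of Fig.~18. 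A single orbit of size $4$ would then force all four boundary points of the tangle to be of a single entry/exit type, a contradiction. A $2+2$ decomposition forces $\Phi^2$ to fix each of the four points of $\Sigma \cap K$; but if $d > 2$, the nontrivial rotation $\Phi^2|_\Sigma$ of $S^2$ has only two fixed points, another contradiction. Hence $d = 2$, i.e.\ $\Phi^2|_\Sigma = \mathrm{Id}$, and the axis argument from the first case again forces $\Phi^2 = \mathrm{Id}$, giving $q = 2$.

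The main obstacle I anticipate is justifying the equivariant invariance of $\Sigma$ at the beginning: the lemma is stated in terms of $\Phi_\Pi$ acting on the realized diagram, whereas the argument requires transporting back to the genuine rotation $\Phi$ of $S^3$ and then replacing an isotopy class of Conway spheres by a strictly invariant representative using the finite-order hypothesis on $\Phi$. Once this equivariant replacement is in place, the rest of the proof is a clean consequence of two facts: the fixed set of any nontrivial power of $\Phi$ is the $1$-dimensional axis $\alpha$, so no $2$-sphere can be pointwise fixed; and the orientation of $K$ is preserved by $\Phi$, forcing the entry/exit structure of the four boundary points of a tangle to be respected by every orbit.
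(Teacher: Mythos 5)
Your proposal is essentially sound and, despite the three-dimensional packaging, rests on the same three ingredients as the paper's proof: Ker\'ekj\'art\'o's theorem, the count of $\langle\Phi\rangle$-orbits among the four boundary points (all of size exactly $q$, because the axis misses $K$, so $q$ divides $4$), and the entry/exit pattern of Fig.~18 to exclude a single orbit of size $4$ in the side-preserving case. The difference is one of locale: the paper stays two-dimensional wherever it can, working with the two discs $\Delta_1,\Delta_2\subset S^2$ bounded by $\gamma$, which are genuinely invariant (or genuinely swapped) by the hypothesis of the lemma, and it invokes an invariant Conway sphere only in the one sub-case where the discs are exchanged. You run everything through the $3$-dimensional Conway sphere $\Sigma$, which buys a cleaner endgame --- a pointwise-fixed $2$-sphere cannot sit inside the $1$-dimensional fixed set of a nontrivial power of $\Phi$, hence $\Phi^2=\mathrm{Id}$ --- at the price of the equivariant-isotopy step you flag at the outset. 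That step is a genuine theorem (equivariant Conway-sphere decomposition \`a la Bonahon--Siebenmann, or Meeks--Scott), not a routine averaging, although the paper is no more explicit about it than you are when it writes down ``the Conway sphere $S_\gamma$ which is invariant by $\Phi_\Pi$''.

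One step as written is a non sequitur and must be repaired. An orientation-reversing finite-order homeomorphism of $S^2$ is \emph{not} in general conjugate to a reflection: Ker\'ekj\'art\'o only gives conjugacy into $O(3)\setminus SO(3)$, which also contains the antipodal map and rotary reflections of order $2m>2$, so ``orientation-reversing, hence of order $2$'' does not follow. The fix in your swapping case is to observe first that, since $\Phi$ exchanges the two complementary balls of $\Sigma$, every fixed point of $\Phi$ lies on $\Sigma$; thus the whole axis circle $\alpha$ is contained in $\Sigma$ and is fixed pointwise by $\Phi|_\Sigma$, and a finite-order homeomorphism of $S^2$ fixing a circle pointwise is conjugate to the identity or to a reflection, the former being excluded by orientation-reversal. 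With that insertion the swapping case closes as you intend. (In your preserving case, note also that the order $d$ of $\Phi|_\Sigma$ is automatically equal to $q$, since the four points of $\Sigma\cap K$ have trivial stabilizer; the ``$d>2$'' detour is therefore redundant, as the two-orbits-of-size-two alternative already forces $q=2$.)
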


\begin{proof} 

Note that each orbit of the action of $\Phi _{\Pi }$ has $q$ elements except
the two orbits composed by the fixed points. 

Let $\Delta _{1}$ and $\Delta _{2}$ be the two disks in the projection
sphere such that  $\gamma$ is $\partial \Delta _{1}=\partial \Delta _{2} $. 

The two disks $\Delta _{1}$ and $\Delta _{2}$ 
are either permuted or invariant by $\Phi _{\Pi }$.

Assume $\Phi _{\Pi }$ permutes the two disks $\Delta _{1}$ and $\Delta _{2}$. Since $\Phi _{\Pi }$ preserves $\gamma \cap K$ and there
are no fixed points by $\Phi _{\Pi }$ on $K$, we have that the set $\gamma
\cap K$ is either an orbit of $\Phi _{\Pi }$ with $q=4$ points, or two orbits with each $%
q=2$ points.\\
Assume $q=4$. Let $S_{\gamma}$ be the Conway sphere corresponding to $\gamma $
which is invariant by $\Phi _{\Pi }$. The homeomorphism $\left. \Phi _{\Pi
}\right\vert _{S_{\gamma}}$ is of order two and preserves the orientation of $S_{\gamma}$. Therefore,
by Ker\'{e}kj\'{a}rto theorem, it is topologically conjugate to a rotation of order 2
of $S_{\gamma}$. Thus $\left. \Phi _{\Pi }\right\vert _{S_{\gamma}}$ has two
fixed points, which must also be the fixed points of $\Phi
_{\Pi }$. This implies that $\left. \Phi _{\Pi }\right\vert _{S_{\gamma}}$ preserves the
orientation. Then the two connected components of $S^{3}-S_{\gamma}$ are invariant by 
$\Phi _{\Pi }$, but this contradicts the hypothesis that $\Phi _{\Pi }(\Delta _{1})=\Delta
_{2}$. Therefore the two disks $\Delta _{1}$ and $\Delta _{2}$ are invariant by $\Phi _{\Pi }$. 

$\star$ Assume $\left.\Phi_{\Pi }\right\vert _{\Delta _{1}}$ preserves the orientation. Then  by Ker\'{e}kj%
\'{a}rto's theorem, $\left. \Phi _{\Pi }\right\vert _{\Delta _{1}}$ is
topologically equivalent to a rotation of $\Delta _{1}$. As in the above
case, since $\Phi _{\Pi }$ preserves $\gamma \cap \Pi$ and there are no fixed
points on $\Pi$, the set $\gamma \cap \Pi$ is either an orbit of $%
\Phi _{\Pi }$ with $q=4$ points or two orbits each with $q=2$. \\The case $q=4$ is excluded: since $%
\Phi _{\Pi }(\Delta _{i})=\Delta _{i}$ and $\gamma \cap \Pi$ is an orbit,
 the points in $\gamma \cap \Pi$ would be all entry points or all exit points, but this
is impossible (for the orientation of the boundary points of a tangle of a projection as described above). 

$\star$ $\star$ Assume $\left. \Phi _{\Pi }\right\vert _{\Delta _{1}}$ reverses the
orientation and $q >2$. Then $\left. \Phi _{\Pi }^{2}\right\vert _{\Delta
_{1}}$ preserves the orientation of $\Delta _{1}$ and has period ${q/ 2} > 1$.
Then $\left. \Phi _{\Pi }^{2}\right\vert _{\Delta _{1}}$ is conjugate to a
rotation such that its fixed points are also the fixed points
of $\left. \Phi _{\Pi }\right\vert _{\Delta _{1}}$, but this contradicts the 
hypothesis that $\left. \Phi _{\Pi }\right\vert _{\Delta _{1}}$ reverses
the orientation.

\end {proof}

\begin{proposition}
If $q \geq 3$, there are no edges in $Fix(\widetilde{\varPhi})$.
\end{proposition}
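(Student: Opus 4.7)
The strategy is a direct contradiction using Lemma 3.1. Suppose, aiming for a contradiction, that there exists an edge $E \in Fix(\widetilde{\varPhi})$. By construction of the essential structure tree, edges of $\widetilde{\mathcal{A}}(K)$ correspond bijectively to circles of $\mathcal{C}_{ess}(\Pi)$, so $E$ represents some essential Conway circle $\gamma$. The condition $\widetilde{\varPhi}(E) = E$ means that $\Phi_\Pi(\gamma)$ is again a circle of $\mathcal{C}_{ess}(\Pi)$ whose corresponding edge in the tree is $E$ itself, i.e. $\Phi_\Pi(\gamma)$ and $\gamma$ represent the same element of the essential family. My plan is to show that after adjusting by an isotopy respecting $\Pi$, one may assume $\Phi_\Pi(\gamma) = \gamma$ as sets, and then apply Lemma 3.1 to conclude $q=2$, contradicting $q \geq 3$.

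The step that requires care is passing from \emph{fixed at the level of the tree} to \emph{setwise invariant as a Conway circle}. For this I would invoke the uniqueness clause of Theorem 2.1: any two realizations of the minimal admissible family (and hence of the essential family obtained by pruning it) are related by an isotopy of $S^2$ respecting $\Pi$. Since $\Phi_\Pi(\gamma)$ and $\gamma$ represent the same edge of $\widetilde{\mathcal{A}}(K)$, there is an isotopy $\iota$ of $S^2$ fixing $\Pi$ with $\iota(\Phi_\Pi(\gamma)) = \gamma$. Replacing $\Phi_\Pi$ by $\iota \circ \Phi_\Pi$ (which changes $\Phi_\Pi$ only within its isotopy class as an isomorphism of realized projections, and therefore does not alter either the induced permutation on crossings, the order $q$, nor the induced tree automorphism $\widetilde{\varPhi}$) gives a new representative of the same symmetry that satisfies $\Phi_\Pi(\gamma) = \gamma$ setwise.

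With this setwise invariance in hand, Lemma 3.1 applies verbatim to $\gamma$: an essential Conway circle that is $\Phi_\Pi$-invariant forces $q = 2$. This directly contradicts the standing hypothesis $q \geq 3$, so no edge of $\widetilde{\mathcal{A}}(K)$ can be fixed by $\widetilde{\varPhi}$. The main (and only nontrivial) obstacle is the isotopy-adjustment argument in the middle paragraph, since without it one could only conclude that $\gamma$ is invariant up to isotopy and would not be in a position to use Lemma 3.1 directly; once that adjustment is justified, the rest is immediate from the lemma.
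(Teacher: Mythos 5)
Your overall route is identical to the paper's: a fixed edge of $\widetilde{\mathcal{A}}(K)$ corresponds to a $\Phi_{\Pi}$-invariant essential Conway circle, and Lemma 3.1 then forces $q=2$, contradicting $q\geq 3$. The paper's proof is exactly this reduction, stated in two sentences, with the setwise invariance of $\gamma_{E}$ simply asserted. You correctly identify that the only nontrivial point is upgrading ``fixed at the level of the tree'' to ``setwise invariant as a circle,'' a point the paper glosses over.

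However, the specific adjustment you propose to bridge that gap does not work as written. Replacing $\Phi_{\Pi}$ by $\iota\circ\Phi_{\Pi}$, where $\iota$ is the terminal map of an isotopy respecting $\Pi$ carrying $\Phi_{\Pi}(\gamma)$ to $\gamma$, does achieve $(\iota\circ\Phi_{\Pi})(\gamma)=\gamma$, but your parenthetical claim that this ``does not alter the order $q$'' is unjustified and in general false: $(\iota\circ\Phi_{\Pi})^{q}$ need not be the identity, so the modified map is no longer periodic. Lemma 3.1 is not a statement about an arbitrary symmetry preserving a circle; its proof uses the periodicity of $\Phi_{\Pi}$ essentially (orbits of size $q$ away from the two fixed points, Ker\'ekj\'art\'o's theorem applied to the restrictions to the Conway sphere $S_{\gamma}$ and to the disk $\Delta_{1}$, and the fact that $\Phi_{\Pi}^{2}|_{\Delta_{1}}$ has period $q/2$). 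The adjustment should go the other way: keep the finite-order map $\Phi_{\Pi}$ fixed and move the circle, i.e.\ choose the representative of the essential Conway family equivariantly with respect to $\Phi_{\Pi}$ (an equivariant refinement of the uniqueness clause of Theorem 2.1), so that the circle corresponding to the fixed edge is genuinely $\Phi_{\Pi}$-invariant. With that substitution your argument closes; as written, the final step ``Lemma 3.1 applies verbatim'' does not follow from what precedes it.
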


\begin{proof}

Assume there is an edge $E$ of $\widetilde{\mathcal{A}}(K)$ fixed by $%
\widetilde{\varPhi}$. The edge $E$ corresponds to a Conway circle $\gamma
_{E}$ invariant by $\Phi _{\Pi }$. Then the
proposition follows from Lemma 3.1.

\bigskip 

\end {proof}

\begin{corollary}
If $q\geq 3$, $Fix(\widetilde{\varPhi})=V_{0}$ where $V_{0}$ is a vertex of $%
\widetilde{\mathcal{A}}(K)$.
\end{corollary}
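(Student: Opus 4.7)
The plan is to observe that this Corollary is essentially a one-step consequence of Proposition 3.1 combined with the basic structural fact about fixed point sets of tree automorphisms that the paper has already recalled at the start of \S 3.1.1.

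First, I would recall the standing fact, explicitly stated in the paragraph preceding \S 3.2, that $Fix(\widetilde{\varPhi})$ is a non-empty subtree of $\widetilde{\mathcal{A}}(K)$. The non-emptiness comes from the standard argument that a simplicial automorphism of a finite tree must fix some vertex (taking, for instance, the center of the tree, which is either a vertex or the midpoint of an edge; in the latter case the two endpoints are swapped, producing a fixed midpoint, and from that one deduces a fixed vertex or a fixed edge in the original simplicial structure). Connectedness comes from the uniqueness of paths in a tree: if $u,v$ are fixed vertices, then the unique path $P$ between them is mapped to a path between $u$ and $v$, hence to itself, and the induced action on the interval $P$ fixing both endpoints must fix $P$ pointwise.

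Next, I would invoke Proposition 3.1, which asserts that under the hypothesis $q \geq 3$ no edge of $\widetilde{\mathcal{A}}(K)$ belongs to $Fix(\widetilde{\varPhi})$. Combining this with the previous paragraph, $Fix(\widetilde{\varPhi})$ is a non-empty connected subtree of $\widetilde{\mathcal{A}}(K)$ that contains no edges; hence it consists of a single vertex, which we denote $V_{0}$.

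There is no real obstacle here: the content of the statement is concentrated in Proposition 3.1, and the Corollary is its clean reformulation in terms of $Fix(\widetilde{\varPhi})$. The only point worth flagging explicitly in the writeup is that tree automorphisms of finite simplicial trees always have non-empty fixed point sets (otherwise one only gets an invariant edge with endpoints swapped, a situation that has already been ruled out by Proposition 3.1 under $q \geq 3$, but which in any case one should mention for completeness).
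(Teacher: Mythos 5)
Your proposal is correct and follows exactly the route the paper intends: the paper states in \S 3.1.1 that $Fix(\widetilde{\varPhi})$ is a non-empty subtree of the tree $\widetilde{\mathcal{A}}(K)$, leaving only the two cases (contains an edge, or is a single vertex), and the corollary is then immediate from Proposition 3.1, which eliminates the edge case for $q\geq 3$. Your extra care in noting that the a priori alternative to a fixed vertex is an invariant edge with swapped endpoints, which is itself excluded by Proposition 3.1, is a worthwhile detail the paper leaves implicit.
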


\subsection{Proof of Visibility Theorem 3.1 and Applications}

\subsubsection{Proof of Visibility Theorem 3.1}

According to Remark 2.8, if the $q$-periodic knot $K$ is a jewel without
boundary or a torus knot of type $(2,q)$, we are done. \newline
Since the non-torus rational knots are only 2-periodic (see for instance
Theorem 3.1 in \cite{glm}), the hypothesis $q\geq 3$ excludes the case of
rational knots.\newline
All that remains is the case of a projection $\Pi $ whose $\mathcal{C}%
_{ess}(\Pi )$ is not empty. According to Corollary 3.3, the set $Fix(\tilde{%
\varPhi})$ is reduced to a vertex $V_{0}$ representing a jewel or a TBD.
\begin{proof}

\item 
\begin{enumerate}
\item Case where $Fix(\tilde{\varPhi})=V_{0}$ where $V_{0}$ corresponds
to a jewel $J_{0}$ with non-empty boundary. \\
Let $\gamma _{1},\dots ,\gamma _{k}$ be the boundary components of $J_{0}$.
Each essential Conway circle $\gamma _{i}$ bounds on $S^{2}$ a disk $\Delta
_{i}$ which does not meet the interior of $J_{0}$. Consider the tangles $%
\mathcal{T}_{i}=(\Delta _{i},\tau _{\Delta _{i}})$ where $i=1,\dots ,k$.
Hence the $k$ underlying discs are distinct. \newline
Since $J_{0}$ is a jewel, no flypes can occur in $J_{0}$. Since $\Phi _{\Pi }
$ does not leave the edges invariant, there are no invariant boundary circles and by Ker\'{e}kj%
\'{a}rto's theorem applied to $\left. \Phi _{\Pi }\right\vert _{J_{0}}$ is topologically conjugate to a
rotation and has two fixed points in the interior of $J_{0}$. By using a
flat homeomorphism, we can modify $\Pi $ such that $\left. \Phi _{\Pi
}\right\vert _{J_{0}}$ is a rotation and acts freely on the $k$ boundary
components of $J_{0}$. After this modification, we continue to denote the new projection and homeomorphism respectively by $\Pi $ and $\Phi _{\Pi }$.\newline
Each circle $\gamma _{i}$ has $q$ images in its orbit. Thus $k=n q$ and we have $k$ distinct tangles $\mathcal{T}_{i}=(\Delta _{i},\Pi
\cap \Delta _{i})$ with underlying disks $\Delta _{i}$ where $i=1,\dots
 k$.\newline
Note that the $k$ boundary components of $J_{0}$ correspond to the $k$ adjacent vertices to $V_0$

Consider the disk $\Delta _{1}$ and its images $\phi (\Delta _{1}),\dots
,\phi ^{q-1}(\Delta _{1})$ denoted by 
\begin{equation*}
\Delta _{1},\Delta _{2},\dots ,\Delta
_{q-1} 
\end{equation*}
and the corresponding tangles 
\begin{equation*}
(\Delta _{1},\tau _{\Delta _{1}}),(\Delta _{2},\tau _{\Delta _{2}}),\dots
,(\Delta _{q-1},\tau _{\Delta _{q-1}})
\end{equation*}%
where $\tau _{\Delta _{i}}=\Pi \cap \Delta _{i}=\Phi _{\Pi }^{i-1}(\tau
_{\Delta _{1}})$ for $i=1,\dots ,q$.

Consider $\Delta _{j}=\phi ^{j-1}(\Delta _{1})$. Then $\phi ^{j-1}(\tau
_{\Delta _{1}})\subset \Delta _{j}$. Since $\phi (\tau _{\Delta _{1}})$ is
equivalent up to standard flypes to $\tau _{\Delta _{2}}$ and since the disks $%
\Delta _{1}$ and $\Delta _{2}$ are distinct, we can independently modify the
projection $\tau _{\Delta _{2}}=\Pi \cap \Delta _{2}$ by flypes such that $%
\tau _{\Delta _{2}}$ is replaced by $\phi (\tau _{\Delta _{1}})$. With these
modifications, we have a new $\Pi ^{\prime }$ and $\Phi _{\Pi ^{\prime
}}^{\prime }$ such that $\left. \Phi _{\Pi ^{\prime }}^{\prime }\right\vert
_{\Delta _{1}}$ does not need flypes. Hence $\left. \Phi _{\Pi ^{\prime
}}^{\prime }\right\vert _{\Delta _{1}}=\left. \phi ^{\prime }\right\vert
_{\Delta _{1}}$ is flat and $\left. \phi \right\vert _{\Delta
_{1}}^{q}=\left. \Phi _{\Pi }^{\prime }\right\vert _{\Delta _{1}}^{q}=%
\mathrm{Id}$. By repeating this process when necessary in all the orbits, we get a new
alternating projection $\widehat{\Pi }$ of $K$ admitting the symmetry $%
\widehat{\Phi }_{\widehat{\Pi }}$ which is a $q$-rotation whose the two fixed
points are inside $J_{0}$.

\item Case where $Fix(\tilde{\varPhi})=V_{0}$ where $V_{0}$ corresponds
to a TBD $\mathcal{T}_{0}=(\Sigma ,\Sigma \cap \Pi )$. Since $\mathcal{T}_{0}
$ is invariant by $\Phi _{\Pi }$, the number of the visible crossings of $\Sigma
\cap \Pi $ is $m q$. Since there are no edges invariant by $%
\widetilde{\Phi }_{\Pi }$, the boundary components of $\Sigma $ are
distributed in $\Phi _{\Pi }$-orbits of $q$ elements and the total number of
these components is $s q$ for some integer $s \geq 1$. By Ker\'{e}kj\'{a}rto's theorem, $\phi $ is
equivalent to a rotation of order $q$ with the two fixed points in $\Sigma $. 

We first modify the projection $\Pi $ and $\Sigma$ such that: 

- $\Sigma$ is contained in a $2$-sphere invariant by the $q$-rotation $r$,

- the visible crossings of $\mathcal{T}_{0}$ are in $q$ twist regions, such that each
twist region has $s$ crossings and the $q$ twist regions are symmetric with
respect to the rotation $r$ and 

- the boundary components of $\mathcal{T}_{0}$ are $s q$ circles distributed in $s$ orbits of the
action of $r$. 

This results in a new projection $\Pi ^{\prime }$ with the such modified TBD $\mathcal{T}^{\prime }_{0}=(\Sigma^{\prime } ,\Sigma^{\prime } \cap \Pi ^{\prime })$.  Then by a process similar to that described above, we will perform flypes if necessary inside the $s q$ tangles whose boundaries are the boundary components of $\mathcal{T}_{0}$, to obtain a projection $\widehat{\Pi }$ displaying the symmetry of order $q$.

\end{enumerate}
\end{proof}
{\bf Question}: Are there any restrictions on the values of $q$ in Visibility Theorem 3.1?: 

\begin{proposition} 
For prime alternating \textbf{knots} where $Fix(\tilde{\varPhi})=V_{0}$ with $V_{0}$ corresponding to a TBD, only the periods $q \equiv 1 \mod 2$ are possible. 
\end{proposition}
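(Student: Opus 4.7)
My plan is to reduce to a parity statement on the linking number $\mathrm{lk}(K, \alpha)$ via the theory of $q$-fold cyclic branched covers, and then to extract this parity from the $2$-strand band structure of the TBD. Set $\bar{K} := K/\Phi$, $\bar\alpha := \alpha/\Phi$, and let $p : S^3 \to \bar{S}^3 := S^3/\Phi$ be the $q$-fold cyclic cover, branched along the unknot $\bar\alpha$. Put $\ell := \mathrm{lk}(\bar{K}, \bar\alpha)$. Computing signed intersections with $p^{-1}$ of a Seifert disk for $\bar\alpha$, whose $q$ $\Phi$-translates are each a Seifert disk for $\alpha$, yields $\mathrm{lk}(K, \alpha) = \ell$; meanwhile the standard fact on cyclic branched covers asserts that $p^{-1}(\bar{K}) = K$ has $\gcd(q, \ell)$ components. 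Since $K$ is a knot, $\gcd(q, \ell) = 1$; it therefore suffices to show that $\ell = \mathrm{lk}(K, \alpha)$ is even, for then $q$ must be odd.

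To compute this parity from $\Pi$, I would use the fact that $\alpha \cap S^2 = \{N, S\}$, the two $\Phi$-fixed points on $S^2$ (a free rotation on $S^2$ being impossible by Ker\'ekj\'arto). Choose a Seifert disk $D$ for $\alpha$ with $D \cap S^2$ equal to an arc $\beta$ joining $N$ to $S$, so that $\mathrm{lk}(K, \alpha) \equiv \Pi \cdot \beta \pmod 2$ (algebraic intersection on $S^2$). Under the hypothesis that $Fix(\tilde\varPhi) = V_0$ is a TBD vertex, both $N$ and $S$ lie in the interior of $\mathcal{T}_0$ by the construction in the proof of Theorem 3.1; hence $\beta$ may be taken inside $\mathcal{T}_0$, reducing the problem to showing $(\Pi \cap \mathcal{T}_0) \cdot \beta \equiv 0 \pmod 2$.

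Here the TBD structure does the work. The projection $\Pi \cap \mathcal{T}_0$ is a $2$-strand band, hence equals $\partial B$ modulo $\partial \mathcal{T}_0$ for a $2$-dimensional orientable region $B \subset \mathcal{T}_0$; concretely, $B$ is the union of one class of the checkerboard $2$-coloring of $\mathcal{T}_0 \setminus \Pi$. The standard intersection formula then gives $(\Pi \cap \mathcal{T}_0) \cdot \beta \equiv [N \in B] + [S \in B] \pmod 2$. In the symmetric form of $\mathcal{T}_0$ built in the proof of Theorem 3.1, the $q$ twist regions are arranged as an equatorial annular band around the $\Phi_\Pi$-axis through $N$ and $S$; the band region $B$ is this $\Phi_\Pi$-invariant annulus, separating the two polar $\Phi_\Pi$-invariant $2$-cells that contain $N$ and $S$. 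Both poles lie in $\mathcal{T}_0 \setminus B$, so $[N \in B] = [S \in B] = 0$ and $\Pi \cdot \beta \equiv 0 \pmod 2$, giving $q$ odd as required. The hard part will be rigorously justifying this geometric placement of $N$ and $S$ outside $B$ for arbitrary orbit multiplicity $s$, which requires unpacking the explicit symmetric model of the modified TBD in Theorem 3.1 — specifically, how the $sq$ boundary circles and $q$ twist regions are organized around the rotation axis.
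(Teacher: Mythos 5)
Your argument is correct in outline, but it takes a genuinely different route from the paper's. The paper's proof is a short combinatorial one: it examines the three possible connection paths (Fig.\ 19) of the tangles hanging off the invariant TBD and traces the two strands around the cyclic band; the $q$-fold symmetry forces the connection types and twist parities to repeat $q$ times, so for even $q$ the total strand permutation is trivial (and any ``turn-back'' connection produces at least $q$ components), yielding at least two components and hence no knot. Your proof instead passes to three dimensions: $\gcd(q,\mathrm{lk}(K,\alpha))=1$ for a periodic knot via the component count in the cyclic branched cover, and then $\mathrm{lk}(K,\alpha)$ is shown to be even from the band structure of the TBD. Both steps of yours are sound: the identification $\mathrm{lk}(K,\alpha)=\ell$ and the reduction of $\mathrm{lk}(K,\alpha)\bmod 2$ to $|\Pi\cap\beta|\bmod 2$ for an arc $\beta$ on $S^{2}$ joining the two fixed points are standard, and the symmetric model built in the proof of Theorem 3.1 (case 2) does place both fixed points in the interior of $\Sigma$, one in each polar complementary disk of the equatorial band. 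Two remarks. First, the ``hard part'' you flag is easier than you fear: since each boundary circle $\gamma_{i}$ meets $\Pi$ in four points, the parity of $|\beta\cap\Gamma|$ is independent of the choice of $\beta$ inside $\Sigma$, so it suffices to exhibit one arc from $N$ to $S$ crossing the band transversally at a single crossing-free, $\gamma_{i}$-free longitude; that arc meets the two strands in exactly two points, and no analysis of the $sq$ boundary circles is needed. Second, your description of the region $B$ is slightly off: one colour class of the checkerboard colouring of $\Sigma\setminus\Gamma$ is not an invariant annulus but a disjoint union of bigons and squares contained in the equatorial band; this does not affect the conclusion, since the two polar regions still lie in the other colour class and hence carry the same colour. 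What your approach buys is a conceptual explanation (evenness of the linking number with the axis) that bypasses case analysis of connection paths; what it costs is the appeal to branched-cover machinery and to the explicit symmetric model, whereas the paper's strand-tracing argument also shows, with no extra work, that a turn-back connection path is incompatible with a knot for any $q\geq 2$.
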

For each tangle of a projection $\Pi$, there are four boundary points located in North-West (NW), North-East (NE), South-East (SE) and South-West (SW). The projection $\Pi$ connects these four points in pairs with three possible connection paths (Fig. 19).\\
If $\Pi$ connects,\\
(1) NW to NE and SW to SE, we have the {\bf H-connection path},\\
(2  NW to SW and NE to SE, we have the {\bf V-connection path},\\
(3) NW to SE and NE to SW, we have  the {\bf  H-connection path}

\begin{figure}[h!]    
   \centering
    \includegraphics[scale=.4]{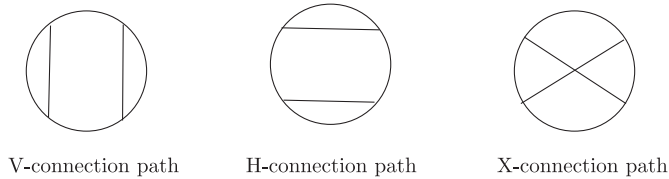} 
\caption{Connection paths of a tangle}       
\end{figure}
\begin{proof} The proof is straightforward by an examination of the possible connection paths (Fig. 19) on the tangles of the TBD. In the case of knots, $q$ cannot be even.\\
 It is interesting to compare this proposition to the result of \cite{boy}. 
\end{proof}
 
\textbf{Conclusion}: Hence for an alternating periodic knot $K$ with period $%
q\geq 3$, there always exists a $q$-periodic alternating projection of $K$.
The only possible obstruction case for a $q$-periodic alternating
projection is when $q=2$. Theorem 3.1 is the equivalent of the Order 4
Theorem 7.1 (\cite{erquwe}) in the study of the visibility of the $+$-
achirality of alternating knots.

\bigskip

\subsubsection{Applications}

(1) Seifert's algorithm applied to a $q$-periodic alternating projection
of a knot $K$ gives rise to a Seifert surface having the genus of $K$ (see
for instance \cite{ga}):

\begin{proposition}
There exists a $q$-equivariant orientable surface of $K$ with minimal genus
for $q\geq 3$.
\end{proposition}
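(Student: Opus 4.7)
The plan is to combine the Visibility Theorem with the classical fact that Seifert's algorithm on a reduced alternating diagram produces a minimal genus Seifert surface. First, I invoke Theorem 3.1 to produce a reduced $q$-periodic alternating projection $\widetilde{\Pi}$ of $K$ on $S^{2}$, realizing the $q$-periodicity as an honest rotation $\rho$ of $S^{2}$ of order $q$ whose two fixed points are disjoint from $\widetilde{\Pi}$. Because the original symmetry $\Phi$ of $(S^{3},K)$ has rotation axis $\alpha$ disjoint from $K$ and acts freely on $K$, and because $q\geq 3$, the induced action on $K$ necessarily preserves its orientation; hence $\rho$ sends oriented strands to oriented strands.

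Next, I apply Seifert's algorithm to $\widetilde{\Pi}$. The algorithm proceeds by a purely local recipe at each crossing, namely the oriented smoothing, followed by attaching a half-twisted band at each crossing with sign prescribed by the over/under information. Each of these local operations is determined by data that $\rho$ permutes equivariantly: $\rho$ permutes the set of crossings of $\widetilde{\Pi}$, respects over/under strands (as a principal part of a flat homeomorphism of the realized projection), and preserves the orientation of $K$. Consequently, the Seifert circles are permuted by $\rho$, the attached bands are permuted compatibly, and the resulting Seifert surface $F$ satisfies $\rho(F)=F$. Thus $F$ is a $q$-equivariant orientable Seifert surface for $K$.

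Finally, I appeal to the theorem of Crowell--Murasugi (in the form proved by Gabai \cite{ga}) which states that for a reduced alternating projection of a knot, the surface produced by Seifert's algorithm realizes the genus of the knot. Since $\widetilde{\Pi}$ is prime, reduced and alternating, $F$ has genus equal to $g(K)$, completing the proof. The degenerate cases flagged in Remark 2.8 are immediate: if $K$ is a torus knot of type $(2,q)$, the standard alternating projection is already $q$-periodic and its Seifert surface is clearly equivariant with minimal genus; if $K$ corresponds to a jewel without boundary, the same argument applies directly since no flypes are required.

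The only delicate point is the verification that Seifert's algorithm is genuinely equivariant with respect to $\rho$, and this reduces to the observation that the algorithm is locally defined from data (crossing position, over/under, induced orientation of strands) that $\rho$ carries equivariantly; there are no global choices that could break the symmetry.
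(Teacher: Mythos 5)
Your proof is correct and follows exactly the route the paper intends: the paper derives this proposition in one line from Theorem 3.1 together with Gabai's result that Seifert's algorithm on an alternating projection yields a minimal genus surface, the equivariance being automatic from the local nature of the algorithm. Your additional checks (orientation preservation of the free action on $K$, equivariance of the oriented smoothings and bands) simply make explicit what the paper leaves implicit.
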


(2) From Visibility Theorem 3.1, we have:

\begin{proposition}
The crossing number of a prime alternating knot that is $q$-periodic with $%
q \geq 3$ is a multiple of $q$.
\end{proposition}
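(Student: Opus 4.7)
The plan is to deduce Proposition 3.4 as a direct counting consequence of Visibility Theorem 3.1 combined with the first Tait conjecture (Kauffman--Murasugi--Thistlethwaite). Start from a reduced alternating projection of $K$ and apply Theorem 3.1 to obtain an alternating projection $\widetilde{\Pi }$ of $K$ on which the $q$-periodicity is visible as a rotation $\rho $ of $S^{2}$ of order $q$. Since the construction in Theorem 3.1 modifies projections only by flypes and flat homeomorphisms, none of which change the number of crossings, $\widetilde{\Pi }$ has the same number of crossings as the reduced alternating projection we began with, and it is itself reduced and alternating.

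Next I would analyse the action of $\rho $ on the crossing set of $\widetilde{\Pi }$. By Ker\'ekj\'arto's theorem the homeomorphism $\rho :S^{2}\rightarrow S^{2}$ is conjugate to a genuine rotation of order $q$, which has exactly two fixed points. These two fixed points are the projections of the points on the rotation axis $\alpha \subset S^{3}$; since $\alpha \cap K=\varnothing $ by Definition 3.2, neither fixed point can lie on $\widetilde{\Pi }$, and in particular neither is a crossing. Consequently $\rho $ acts freely on the finite set of crossings of $\widetilde{\Pi }$, so every $\rho $-orbit of crossings has cardinality $q$ and the total number of crossings of $\widetilde{\Pi }$ is a multiple of $q$.

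Finally, by the first Tait conjecture (proved independently by Kauffman, Murasugi, and Thistlethwaite), for a prime alternating knot every reduced alternating projection realises the crossing number. Hence the crossing number $c(K)$ equals the number of crossings of $\widetilde{\Pi }$, which we just showed is divisible by $q$.

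I do not expect any serious obstacle beyond correctly invoking Theorem 3.1 and checking that the two fixed points of the rotation on $S^{2}$ avoid the crossings. The only mild subtlety worth recording is that one must use the reduced-alternating version of the projection furnished by Theorem 3.1, so that the Tait minimality theorem applies and we can identify the crossing count of $\widetilde{\Pi }$ with the invariant $c(K)$.
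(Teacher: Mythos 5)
Your proposal is correct and follows exactly the route the paper intends (the paper states Proposition 3.4 as an immediate consequence of Theorem 3.1 without writing out the details): pass to a $q$-periodic reduced alternating projection, note that the order-$q$ rotation acts freely on the crossing set, and invoke the Tait minimality of reduced alternating diagrams. The only point stated a little loosely is the claim that the two fixed points of the rotation on $S^{2}$ are ``the projections of the axis points'' and hence avoid $\widetilde{\Pi}$; the cleaner justification is that a fixed point on an arc of $\widetilde{\Pi}$ would give a fixed point of $\Phi$ on $K$ (excluded since $\alpha\cap K=\varnothing$), while a fixed crossing is impossible for $q\geq 3$ because the homeomorphism preserves over- and under-arcs.
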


(3) We now use Visibility Theorem 3.1 with the Murasugi decomposition of
alternating links (\cite{quwe1},\cite{quwe2}) to study the 3-periodicity of
the knot $12a_{634}$. We have

\begin{proposition}
The knot $12a_{634}$ is not 3-periodic.
\end{proposition}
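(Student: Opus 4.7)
The strategy is a short contradiction argument that combines Visibility Theorem~3.1 with Lemma~3.2 and the explicit Murasugi decomposition of $12a_{634}$ recalled from \cite{quwe1,quwe2}. I would first suppose that $K = 12a_{634}$ is 3-periodic. By Theorem~3.1, $K$ admits a reduced prime alternating projection $\Pi$ on which the periodicity is realized by a geometric $\tfrac{2\pi}{3}$-rotation $\Phi$ of $S^{2}$. Since the Murasugi decomposition of a prime reduced alternating projection into atoms is canonical, $\Phi$ induces an automorphism $\widetilde{\Phi}$ of order dividing $3$ on the adjacency graph $G$ of the Murasugi atoms of $\Pi$.

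Next I would use the hypothesis that $G$ is the tree on two vertices $A_{1},A_{2}$. Its full automorphism group has order~$2$, which is coprime to~$3$, so $\widetilde{\Phi}$ must be the identity on $G$; in particular, each atom $A_{i}$ is set-wise invariant under $\Phi$. Applying Lemma~3.2 (the application of Corollary~1 of \cite{co} announced in the introduction) to each of the two fixed atoms, I deduce that each $A_{i}$ is itself a prime 3-periodic alternating piece.

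Finally, I would apply the crossing-number divisibility statement of Proposition~3.4 to each atom (the argument applies verbatim to each $A_{i}$ viewed as a 3-periodic alternating diagram). This would force the crossing number of each $A_{i}$ to be a multiple of~$3$. Reading off the two explicit Murasugi atoms of $12a_{634}$ from \cite{quwe1,quwe2} then shows that this divisibility fails for at least one $A_{i}$, contradicting the 3-periodicity assumption.

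The hardest step, and the only one carrying content specific to $12a_{634}$, is the explicit identification of the two Murasugi atoms and their crossing numbers from \cite{quwe1,quwe2}; once those are in hand, the rest of the argument is essentially formal. The passage from the 3-periodicity of $K$ to a 3-periodic alternating projection, then to a period-$3$ action on the tree of atoms, and then to a 3-periodic symmetry of each fixed atom, is supplied directly by Theorem~3.1 and Lemma~3.2, and the final numerical contradiction is immediate from Proposition~3.4.
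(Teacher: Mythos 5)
Your argument agrees with the paper's up to the last step: you invoke Theorem~3.1 to get a $3$-periodic alternating projection, observe that the order-$3$ automorphism of the two-vertex adjacency tree must be the identity, and conclude via Lemma~3.2 that both Murasugi atoms are $3$-periodic. That much is exactly the paper's route. The problem is your closing step. The Murasugi decomposition here is $12a_{634}=3_1*\overline{9_{10}}$, so the two atoms have crossing numbers $3$ and $9$ --- both multiples of $3$. The divisibility criterion of Proposition~3.4 therefore yields no contradiction at all; the step you flagged as ``immediate'' is the one that fails, and you never actually checked the numbers.

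The obstruction the paper uses instead is of a different nature: $\overline{9_{10}}$ is a non-torus rational (two-bridge) knot, and by the classification of periods of rational knots (Theorem~6.1 of Gordon--Litherland--Murasugi, cited as \cite{glm}) such a knot admits only period $2$, hence is not $3$-periodic. That contradicts the conclusion of Lemma~3.2 that both atoms are $3$-periodic, and finishes the proof. So your scaffolding is sound, but you need to replace the crossing-number argument by the rational-knot periodicity obstruction (or some other genuine obstruction to $3$-periodicity of $\overline{9_{10}}$); as written, the proof does not close.
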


\begin{figure}[h!]
\centering
\includegraphics[scale=.3]{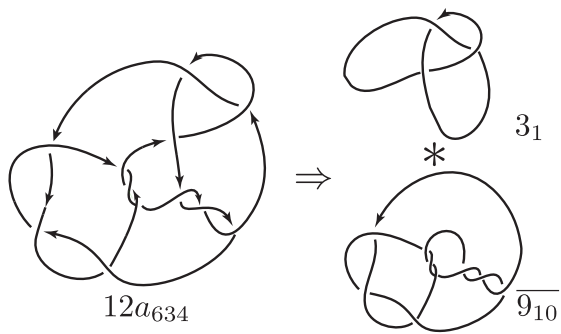}
\caption{$12a_{634}=3_1 *\overline{9_{10}} $}
\end{figure}
\begin{figure}[h!]
\centering
\includegraphics[scale=.4]{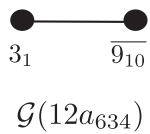}
\caption{ Adjacency graph $\mathcal{G}(12a_{634})$}
\end{figure}
\begin{proof}
Let us consider the Murasugi decomposition of the knot $12a_{634}$ (Fig. 20) and its adjacency graph $\mathcal G(12a_{634})$. With the notations of \cite{quwe2}, we have the Murasugi decomposition of $12a_{634}$ as:
$$12a_{634} = 3_1 *\overline{9_{10}}$$
where the knot $\overline{9_{10}}$ is the mirror image of ${9_{10}}$.
Thus, the adjacency graph $\mathcal G(12a_{634})$ is a tree with 2 vertices corresponding to the trefoil knot $3_{1}$ and the knot $\overline{9_{10}}$ (Fig. 21).

The following lemma is useful for our analysis:

\begin{lemma} Suppose that a prime non-splittable oriented link $L$ has a $q$-periodic alternating diagram and that its Murasugi adjacency graph is a tree with 2 vertices. Then the two constituent atoms of $L$ are $q$-periodic.

\end{lemma}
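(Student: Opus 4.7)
The plan is to use the canonical character of the Murasugi atomic decomposition of alternating links, together with Corollary 1 of \cite{co}, to transfer the global $q$-symmetry of $L$ to each of its two atoms. Given the $q$-periodic alternating diagram $\Pi$ of $L$, the machinery set up earlier in the paper (Visibility Theorem 3.1 and the Ker\'ekj\'arto/Flyping arguments of \S 3) provides an order-$q$ isomorphism $\Phi_{\Pi}$ of the realized projection $\lambda(\Pi)$ whose principal part is a rotation of $S^{2}$ of order $q$ preserving $\Pi$.

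The first step is to note that, exactly as with the canonical Conway decomposition, the Murasugi atomic decomposition of an alternating projection is canonical and unique up to isotopy preserving $\Pi$ (\cite{quwe1},\cite{quwe2}). Consequently $\Phi_{\Pi}$ permutes the collection of Murasugi polygons of $\Pi$ and descends to an automorphism $\widetilde\Phi$ of the adjacency graph $\mathcal{G}(L)$ of order dividing $q$.

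The second step is to show that both vertices of $\mathcal{G}(L)$ are fixed by $\widetilde\Phi$. Since $\mathcal{G}(L)$ is a tree with exactly two vertices joined by a single edge, $\mathrm{Aut}(\mathcal{G}(L))\cong \mathbb{Z}/2\mathbb{Z}$, so either $\widetilde\Phi=\mathrm{Id}$ or $\widetilde\Phi$ swaps the two vertices. A swap would force the two atoms to share the same knot type, and one can reduce to the fixed case by replacing $\Phi_{\Pi}$ with $\Phi_{\Pi}^{2}$ on each atom after relabeling; in the application to $12a_{634}=3_{1}\ast\overline{9_{10}}$ the two atoms are visibly distinct knot types, so $\widetilde\Phi$ is automatically the identity. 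In either case both atoms are $\Phi_{\Pi}$-invariant.

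Once both atoms $L_{1}$ and $L_{2}$ are $\Phi_{\Pi}$-invariant, the unique edge of $\mathcal{G}(L)$ corresponds to a Murasugi polygon $P\subset S^{2}$ which is itself $\Phi_{\Pi}$-invariant, so $\Phi_{\Pi}$ restricts to a symmetry of each sub-diagram $\Pi_{i}$ of $\Pi$ cut off by $P$. The order of each restriction divides $q$, and the main obstacle of the argument is to show that this order is exactly $q$, i.e.\ that the restricted symmetries extend to genuine $q$-rotations of $(S^{3},L_{i})$ rather than collapsing to a smaller period on one of the atoms. This is precisely the content of Corollary 1 of \cite{co}, which relates the $q$-periodicity of an alternating Murasugi sum to the $q$-periodicity of its atoms; applying it to each invariant sub-diagram $\Pi_{i}$ yields the $q$-periodicity of $L_{1}$ and $L_{2}$ and completes the proof.
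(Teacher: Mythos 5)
Your overall strategy---push the symmetry down to an automorphism $\widetilde\Phi$ of the two-vertex adjacency tree, argue that both vertices are fixed, and then use Corollary 1 of \cite{co} to pass from invariance of an atom to its $q$-periodicity---matches the paper's proof in outline, but your second step has a genuine gap: you do not actually rule out the case where $\widetilde\Phi$ swaps the two vertices. Neither of the two arguments you offer for this proves the lemma as stated. Replacing $\Phi_{\Pi}$ by $\Phi_{\Pi}^{2}$ produces a symmetry of order only $q/2$ when $q$ is even, so at best it establishes $q/2$-periodicity of the atoms, not $q$-periodicity; and the observation that the two atoms of $12a_{634}$ have distinct knot types settles only that one application, whereas the lemma is stated for an arbitrary link $L$ whose two atoms may well be equivalent. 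Note that when $q$ is odd the swap is excluded for free, since $\widetilde\Phi^{q}=\mathrm{Id}$ forces the order of $\widetilde\Phi$ to divide the odd number $q$; so the even case is precisely where your reduction breaks down and where an additional idea is required.

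The paper closes this hole by reading more out of Corollary 1 of \cite{co} than you do: that corollary is invoked to assert that the periodic automorphism of the adjacency tree has a fixed \emph{vertex}, and that the atom corresponding to a fixed vertex is $q$-periodic. On a two-vertex tree, any automorphism fixing a vertex is the identity, so both vertices are fixed and both atoms are $q$-periodic. In other words, the existence of a fixed vertex is an input supplied by the cited result, not something to be derived from $\mathrm{Aut}(\mathcal{G}(L))\cong \mathbb{Z}/2\mathbb{Z}$ alone. If you wanted a self-contained exclusion of the swap, you would need an argument analogous to Lemma 3.1 of the paper showing that a symmetry exchanging the two sides of the separating curve of the Murasugi decomposition forces $q=2$; as written, your proof establishes the lemma only for odd $q$ or for atoms of distinct knot types.
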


\begin{proof} According to Corollary 1 (in \cite{co}), since the adjacent graph ${\cal G}(L)$ is a tree,  its periodic automorphism has a fixed point which corresponds to a $q$-periodic atom. Moreover in the case where ${\cal G}(L)$ has only two vertices, the periodic automorphism is reduced to the identity and the two atoms are therefore both $q$-periodic.
\end{proof}

By Theorem 3.1, if the knot $12a_{634}$ were 3-periodic, it would admit a 3-periodic alternating projection and we would be able to apply Lemma 3.2. 
However since the knot $\overline{9_{10}}$, one of the two constituent atoms of $12a_{634}$ is a non-torus rational knot, hence it is not 3-periodic; it is only 2-periodic (Theorem 6.1 in \cite{glm}). Hence by Lemma 3.2, we can conclude that $12a_{634}$ is not 3-periodic.
 \end {proof}
 
With this result, like S. Jabuka and S. Naik \cite{jana}, we thus complete the tabulation of the $q$-periodic prime alternating twelve-crossing knots where $q$ is an odd prime but our proof is not supported on computer calculations.

\begin{remark}
The Murasugi decomposition with its adjacency graph enables to deduce that
the knot $12a_{634}$ is not $q$-periodic for any $q \geq 3$, chiral and
non-invertible (\cite{quwe1}, \cite{quwe2}).
\end{remark}

\section{Addendum}

There are overlapping results between the paper of Keegan Boyle \cite{boy}
and this one. Both these papers use flypes as a main tool, but differ in
their techniques. Since non-trivial flypes lie completely in the arborescent
part of alternating projections and since the canonical structure tree
inherits the $q$-periodicity of a $q$-periodic alternating knot, we can
adjust by flypes to derive a $q$-periodic alternating projection. Therefore
our proof is somewhat constructive and also deals with the case of $q$%
-periodicity with $q$ even.

\end{document}